\documentclass[reqno]{amsart}

\usepackage{mathabx}
\usepackage[utf8]{inputenc}
\usepackage[english]{babel}
\usepackage{amsmath,amsthm,amsfonts,amssymb,mathtools,mathrsfs}
\usepackage{enumitem}
\usepackage[foot]{amsaddr}
\usepackage[left=1.2in,right=1.2in,top=1.5in,bottom=1.5in]{geometry}
\usepackage[dvipsnames]{xcolor}
\definecolor{dkblue}{RGB}{1,31,91}
\usepackage[numbers,sort&compress]{natbib}
\usepackage[colorlinks=true,pdfstartview=FitV,linkcolor=dkblue,citecolor=dkblue,urlcolor=dkblue]{hyperref}
\hypersetup{
 pdftitle={Finite time singularity formation for a nonlocal Boussinesq system arising in cold-plasma dynamics},
 pdfauthor={Diego Alonso-Oran and Rafael Granero-Belinchon}
}

\normalsize
\theoremstyle{definition}
\newtheorem{Theorem}{Theorem}[section]
\newtheorem{Proposition}[Theorem]{Proposition}
\newtheorem{Lemma}[Theorem]{Lemma}

\newtheorem{Remark}[Theorem]{Remark}
\numberwithin{equation}{section}

\newcommand{\R}{\mathbb R}
\newcommand{\Q}{\mathcal Q}
\newcommand{\Lop}{\mathcal L}
\newcommand{\Nop}{\mathcal N}
\newcommand{\Bop}{\mathcal B}
\newcommand{\Cop}{\mathcal C}
\newcommand{\En}{\mathscr E}
\newcommand{\dd}{\,\mathrm d}
\newcommand{\norm}[1]{\left\lVert#1\right\rVert}
\newcommand{\abs}[1]{\left\lvert#1\right\rvert}

\begin{document}
\allowdisplaybreaks[1]

\keywords{cold plasma, nonlocal Boussinesq system, finite-time singularities}
\subjclass[2020]{35B44, 35Q35}

\title[Finite time singularities for a nonlocal Boussinesq system]{Finite time singularity formation for a nonlocal Boussinesq system arising in cold-plasma dynamics}

\author[D. Alonso-Or\'an]{Diego Alonso-Or\'an}
\address{Departamento de An\'alisis Matem\'atico and Instituto de Matemáticas y Aplicaciones (IMAULL), Universidad de La Laguna, Astrof\'isico Francisco S\'anchez s/n, 38271 La Laguna, Spain. \href{mailto:dalonsoo@ull.edu.es}{dalonsoo@ull.edu.es}}

\author[R. Granero-Belinch\'on]{Rafael Granero-Belinch\'on}
\address{Departamento de Matem\'aticas, Estad\'istica y Computaci\'on, Universidad de Cantabria. Avda. Los Castros s/n, Santander, Spain. \href{mailto:rafael.granero@unican.es}{rafael.granero@unican.es}}

\date{\today}

\begin{abstract}
We study finite-time singularity formation for a nonlocal Boussinesq
system arising in cold-plasma dynamics. We show that smooth,
symmetric initial data with strictly positive total density
and a sufficiently negative velocity slope at the origin
lead to finite-time blow-up. More precisely, we give an explicit upper bound for the lifespan and show that the velocity gradient becomes
unbounded.
\end{abstract}

\thispagestyle{empty}
\maketitle
\tableofcontents

\section{Introduction}\label{sec:introduction}
In this paper, we study the formation of finite-time singularity for the nonlocal Boussinesq
system
\begin{equation}\label{eq:system-intro}
\begin{cases}
 h_t+(hv)_x+v_x=0,\\
 v_t+vv_x+[\Lop,\Nop h]h+\Nop h=0,
\end{cases}
\qquad (t,x)\in[0,T)\times\R,
\end{equation}
supplemented with initial conditions $h(0,x)=h_0(x),\ v(0,x)=v_0(x).$ The nonlocal operators are defined by
\begin{equation}\label{eq:operators-intro}
 \Lop=-\partial_x^2(1-\partial_x^2)^{-1},
 \qquad
 \Nop=\partial_x(1-\partial_x^2)^{-1},
\end{equation}
and
$
 [\Lop,f]g=\Lop(fg)-f\Lop g,
$
denotes the commutator between \(\Lop\) and multiplication by \(f\).
Here \(h\) represents the perturbation of the ionic density from a
constant equilibrium and \(v\) denotes the ionic
velocity. System~\eqref{eq:system-intro} was derived in
\cite{AlonsoOranDuranGranero2024} as an asymptotic model for
collision-free cold plasmas in a magnetic field.

The physical background of \eqref{eq:system-intro} lies in the study of
one-dimensional hydromagnetic waves. Under the cold-plasma approximation,
and neglecting electron inertia, charge separation, and displacement
current, the parent model introduced by Gardner and Morikawa
\cite{GardnerMorikawa1960} takes the form
\begin{equation}\label{eq:parent-plasma}
\begin{cases}
 n_t+(Vn)_x=0,\\
 V_t+VV_x+\dfrac{bb_x}{n}=0,\\[0.4ex]
 b-n-\left(\dfrac{b_x}{n}\right)_x=0.
\end{cases}
\end{equation}
The variables \(n\), \(V\), and \(b\) describe the ionic density, the
ionic velocity, and the magnetic field, respectively. The first two
equations govern the fluid motion, whereas the third relates the magnetic
field to the density through an elliptic equation. This coupling provides
a natural setting in which nonlinear transport interacts with nonlocal
effects.

The derivation and analysis of reduced equations for these waves have a
long history. Gardner and Morikawa \cite{GardnerMorikawa1960} identified
a formal Korteweg--de Vries regime, connecting cold-plasma dynamics with
the theory of weakly nonlinear dispersive waves. Subsequent asymptotic
reductions were developed by Berezin and Karpman
\cite{BerezinKarpman1964}, Kakutani, Ono, Taniuti, and Wei
\cite{KakutaniOnoTaniutiWei1968}, and Su and Gardner
\cite{SuGardner1969}. A rigorous justification of the KdV approximation
on its natural long time scale was obtained by Pu and Li
\cite{PuLi2019}. Related multiscale reductions in magnetohydrodynamics
were studied in \cite{AlonsoOran2021}. These developments motivate the
study of asymptotic models that retain different features of the
underlying dynamics, as well as the comparison of their regularity and
singularity mechanisms.

For the system \eqref{eq:parent-plasma}, local well-posedness was
established by Alonso-Or\'an and Granero-Belinch\'on
\cite{AlonsoOranGranero2024}. More recently, Bae, Choi, and Kwon
\cite{BaeChoiKwon2025} proved finite-time \(C^1\) singularity formation,
including for certain initial data with vanishing velocity gradient.
Singularity formation in a related Euler-Poisson model was investigated
by the same authors in \cite{BaeChoiKwon2024}. These results demonstrate
the relevance of finite-time breakdown in plasma fluid dynamics.
Nevertheless, the passage to an asymptotic model changes the coupling
between the unknowns and the structure of the nonlocal terms.
Consequently, singularity results for the system  \eqref{eq:parent-plasma}, does not
directly settle the corresponding question for
\eqref{eq:system-intro}. \medskip

Finite-time singularity formation has been investigated for several
Boussinesq-type systems. Numerical simulations in
\cite{bona2016singular,SunXieXing2022} provide evidence of finite-time
blow-up, while \cite{BaeGranero2022} establishes a scenario leading
to finite-time singularity formation for the $abcd$-Boussinesq
system. These works motivate the search for conditions on the
initial data that rigorously imply finite-time breakdown.
In this paper, we address this question for the nonlocal
Boussinesq system \eqref{eq:system-intro} arising in cold-plasma
dynamics.

The mathematical analysis of \eqref{eq:system-intro} was initiated
in \cite{AlonsoOranDuranGranero2024}, where local existence and
uniqueness were proved for initial data in
\(H^2(\R)\times H^3(\R)\). That work also identified conservation
laws and a Hamiltonian formulation of the system. Related
traveling-wave solutions were subsequently studied in
\cite{AlonsoOranDuranGranero2025}. Building on the local theory
and the conserved Hamiltonian, we establish finite-time
singularity formation under an explicit condition on the
initial data. The main result can be stated informally as follows.

\begin{Theorem}
There exist smooth initial data 
for which the corresponding solution to \eqref{eq:system-intro}
develops a singularity in finite time.
\end{Theorem}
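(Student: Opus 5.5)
The plan is to run a symmetry-based Riccati argument for the velocity slope at the origin, in the spirit of blow-up proofs for Burgers-type and Euler--Poisson systems. Take initial data with $h_0$ even and $v_0$ odd. Uniqueness for the local theory in $H^2\times H^3$ from \cite{AlonsoOranDuranGranero2024}, together with the parity properties of $\Lop$ (even symbol) and $\Nop$ (odd symbol), shows that these symmetries persist. Indeed, $(h,v)\mapsto(h(-x),-v(-x))$ maps solutions to solutions. Hence $v(t,0)=0$ and $h_x(t,0)=0$ for all $t$ in the lifespan.

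Next, differentiate the velocity equation in $x$ and evaluate at $x=0$. Set $m(t)=v_x(t,0)$. Using $v(t,0)=0$, the transport term gives exactly $-m^2$, so
\begin{equation*}
m'(t)=-m(t)^2-\partial_x\big([\Lop,\Nop h]h+\Nop h\big)(t,0).
\end{equation*}
The key is to bound the forcing. Note that $\partial_x\Nop=-\Lop$ and $\partial_x\Nop h=\Lop h=h-(1-\partial_x^2)^{-1}h$. The kernel of $(1-\partial_x^2)^{-1}$ is $\tfrac12e^{-|x|}$, with $L^1$ norm $1$. Therefore each term in the forcing is controlled by $\|h\|_{L^\infty}$, $\|\Nop h\|_{L^\infty}$, $\|\Nop h\|_{W^{1,\infty}}$ and so on. For the commutator, I would expand $[\Lop,f]g=(1-\partial_x^2)^{-1}(fg)\cdot(-1)+f(1-\partial_x^2)^{-1}g$ (signs up to convention), with $f=\Nop h$, and then differentiate.

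The hard point is that $h(t,0)$ itself appears, and it is not a priori bounded. I would handle this with the conservation laws and the Hamiltonian from \cite{AlonsoOranDuranGranero2024}. The mass $\int h$ is conserved. I expect the Hamiltonian to be of the form $\int (1+h)v^2+ h\,(1-\partial_x^2)^{-1}h+\dots$, which controls $\|h\|_{H^{-1}}$-type norms and hence $\|\Nop h\|_{L^\infty}$ and $\|(1-\partial_x^2)^{-1}h\|_{L^\infty}$ uniformly in time. The requirement of strictly positive total density $1+h_0>0$ (preserved along characteristics by the continuity equation) would make that energy coercive.

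The remaining local term is of the form $\pm h(t,0)$, possibly multiplied by a bounded factor. For this I would use the continuity equation at the origin:
\begin{equation*}
\tfrac{d}{dt}(1+h(t,0))=-m(t)(1+h(t,0)).
\end{equation*}
This gives $1+h(t,0)=(1+h_0(0))\exp(-\int_0^t m)$. If the sign works out favourably, the term $\mp h(t,0)$ has the good sign and can be dropped, because positivity of $1+h$ gives a one-sided bound. Otherwise I would combine the two ODEs into a closed system for $(m,\rho)$ with $\rho=1+h(t,0)$, as in Euler--Poisson blow-up. Introducing $w=m/\rho$ gives $w'=-(\text{forcing})/\rho$, and with $\rho^{-1}$ controlled this yields a comparison argument.

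The conclusion would be an inequality $m'\le -m^2+C_0$, where $C_0$ depends only on conserved quantities of the data. If $m(0)<-\sqrt{C_0}$, comparison with the Riccati equation shows that $m(t)\to-\infty$ before $T^*=\frac{1}{\sqrt{C_0}}\operatorname{artanh}^{-1}$-type explicit time. For $C_0=0$ this is simply $T^*\le 1/|m(0)|$. Therefore $\|v_x\|_{L^\infty}$ blows up, contradicting global smoothness.

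I expect the main obstacle to be obtaining the uniform bound $C_0$ on the nonlocal forcing, particularly the commutator term and the pointwise value $h(t,0)$, using only the Hamiltonian and positivity. Finding the right sign structure is where the argument could fail.
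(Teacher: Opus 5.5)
There is a genuine gap, and it sits exactly in the step you flag as the obstacle. Your overall architecture (parity, the Riccati equation for $m(t)=v_x(t,0)$, the continuity equation at the origin) is the paper's, but the forcing term is mishandled, and a sign slip steers you toward the wrong branch.

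\textbf{The sign of the density term.} In fact $\partial_x\Nop h=\partial_x^2\Q h=\Q h-h=-\Lop h$, not $\Lop h$. Write $\Cop(h)=(1+\Q h)\Nop h-\frac12\Nop[(\Q h)^2-(\Nop h)^2]$ and use $\Nop h(t,0)=0$. Then one gets exactly
\[
m'=-m^2+\rho(t,0)\bigl(1+\Q h(t,0)\bigr)-\bigl(1+\Q h(t,0)\bigr)^2-\tfrac12\bigl(\Q h(t,0)\bigr)^2+\tfrac12\Q\bigl[(\Q h)^2-(\Nop h)^2\bigr](t,0).
\]
So the local density term has the unfavourable sign and cannot be dropped. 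Moreover, $\rho(t,0)=\rho_0(0)e^{-\int_0^t m}$ grows precisely when $m<0$. Hence the inequality $m'\le -m^2+C_0$, with $C_0$ depending only on conserved quantities, is not available, and your final Riccati comparison has nothing to act on. The best one gets is $m'\le -m^2+K_0\rho(t,0)+\En_0/2$.

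Closing the argument requires the coupled analysis you only mention as a fallback. The paper proves that the region $y>0$, $y^2>2K_0\rho(t,0)+c^2$ is invariant, with $y=-m$ and $c^2=\En_0/2$. This yields $y'\ge\frac12(y^2-c^2)$. Your $w=m/\rho$ idea would also close. While $m<0$ one has $\rho(t,0)\ge\rho_0(0)$, so $w'\le K_0+c^2/\rho_0(0)$. Then $(1/\rho)'=w$ forces $1/\rho(t,0)$ to reach zero in finite time once $m(0)^2>2K_0\rho_0(0)+\En_0$. But this must actually be carried out, and it presupposes a one-sided bound on the forcing.

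\textbf{The mechanism for bounding the forcing.} That bound is the second gap: the route you propose does not work. Since $\int(\Bop h)^2=\int[(\Q h)^2+(\Nop h)^2]$, the Hamiltonian equals $\frac12\int[\rho(v^2+(\Nop h)^2)+(\Q h)^2]$. With $\rho>0$ it controls $\norm{\Q h}_{L^2}$, but only the weighted quantity $\int\rho(\Nop h)^2$. Because $\rho$ has no uniform positive lower bound, this does not give $H^{-1}$ control. Even $H^{-1}$ control would not bound $\norm{\Nop h}_{L^\infty}$, and $\norm{\Q h}_{L^2}$ alone does not bound the point value $1+\Q h(t,0)$.

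The missing idea is to combine positivity of $\rho$ with positivity of the kernel. One has $1+\Q h=G*\rho>0$, and $G\le 2\,G*G$ since $G*G=\frac14(1+\abs{x})e^{-\abs{x}}$. Cauchy--Schwarz with $\norm{G}_{L^2}=\frac12$ then gives
\[
G*\rho\le 2+2\,G*\Q h\le 2+\sqrt{2\En_0}=K_0 .
\]
The paper uses this at $x=0$. It drops $-\frac12\Q[(\Nop h)^2](t,0)$ by its sign and bounds $\Q[(\Q h)^2](t,0)\le\frac12\norm{\Q h}_{L^2}^2\le\En_0$. Since $\abs{\Nop h}=\abs{G'*\rho}\le G*\rho\le K_0$, your term-by-term absolute bounds can in fact be rescued, but only through this positivity argument, not the one you propose.

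\textbf{A minor point.} $T_{\max}\le T^*$ by itself does not show that $\norm{v_x}_{L^\infty}$ is the quantity that blows up. That requires a continuation criterion such as Theorem~\ref{thm:continuation}.
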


For the coupled system \eqref{eq:system-intro}, the main difficulty
is the interaction between the density and the velocity. The term
\(vv_x\) can cause a negative velocity slope to become steeper.
At the same time, compression increases the total density \(1+h\),
which enters the nonlocal term in the velocity equation. A proof of
singularity formation must therefore control how the growing density
affects the velocity slope. This coupling prevents a direct application
of the arguments used for the scalar models.

In this paper, we address this difficulty by combining the conserved
Hamiltonian with the positivity of the underlying convolution kernel.
Together with the preservation of symmetry, these properties allow us
to control the nonlocal terms and prove finite-time singularity formation
for symmetric initial data with strictly positive total density and
sufficiently strong compression, cf. Theorem \ref{thm:main}.\medskip

The paper is organized as follows. Section~\ref{sec:notation} introduces the notation and states the main
theorem.
Section~\ref{sec:preliminaries} establishes the preservation of positivity
and symmetry, together with the estimates for the nonlocal terms.
Section~\ref{sec:proof} contains the proof of the main theorem. Finally, Appendix~\ref{app:continuation}
presents the refined energy estimate and proves the
continuation criterion.

\section{Notation and main result}\label{sec:notation}
\subsection{Notation}
 We write \(H^s=H^s(\R)\) to denote the Sobolev space with norm
\[
 \norm{f}_{H^s}^2
 =\int_\R(1+\xi^2)^s\abs{\widehat f(\xi)}^2\dd\xi,
\]
where \(\widehat f\) is the Fourier transform of \(f\). We denote by 
\(f^\pm=\max\{\pm f,0\}\) the positive and negative parts of
a real-valued function. \medskip

Let \(\Q=(1-\partial_x^2)^{-1}\) denote the inverse Helmholtz
operator. Together with \(\Lop\) and \(\Nop\), it admits the
Fourier representation
\begin{equation}\label{eq:operator-definitions}
\begin{gathered}
 \Q=(1-\partial_x^2)^{-1},\qquad
 \Lop=I-\Q,\qquad
 \Nop=\partial_x\Q,\\[0.5ex]
 \widehat{\Q f}(\xi)=\frac{\widehat f(\xi)}{1+\xi^2},
 \qquad
 \widehat{\Lop f}(\xi)=\frac{\xi^2\widehat f(\xi)}{1+\xi^2},
 \qquad
 \widehat{\Nop f}(\xi)=\frac{i\xi\widehat f(\xi)}{1+\xi^2}.
\end{gathered}
\end{equation}
Thus \(\Q\) and \(\Lop\) are self-adjoint on \(L^2(\R)\),
whereas \(\Nop\) is skew-adjoint. All three operators commute
with spatial derivatives, and \(\partial_x\Nop=-\Lop\). The operator \(\Q\) can also be represented as convolution with
the Helmholtz kernel:
\begin{equation}\label{eq:Helmholtz-kernel}
 \Q f=G*f,\qquad G(x)=\frac12e^{-\abs{x}}.
\end{equation}
This kernel satisfies
\[
 G\ge0,\qquad
 \norm{G}_{L^1}=\norm{G'}_{L^1}=1,\qquad
 \abs{G'}=G\quad\text{a.e.}
\]
Writing \(\rho=1+h\) for the total density, we have
\begin{equation}\label{eq:continuity}
 \rho_t+(\rho v)_x=0,\qquad \Q\rho=1+\Q h.
\end{equation}
Finally, we use the nonnegative self-adjoint operators
\(\Bop=\Q^{1/2}\) and \(\sqrt{\Lop}\), defined in Fourier variables by
\[
 \widehat{\Bop f}(\xi)
 =\frac{\widehat f(\xi)}{\sqrt{1+\xi^2}},
 \qquad
 \widehat{\sqrt{\Lop}\,f}(\xi)
 =\frac{\abs{\xi}\widehat f(\xi)}{\sqrt{1+\xi^2}}.
\]

\subsection{Statement of the main result}\label{subsec:main-result}
Before stating the main result, we recall the conserved Hamiltonian
of \eqref{eq:system-intro},
\begin{equation}\label{eq:Hamiltonian-native}
 \En(h,v)=\frac12\int_\R
 \left[(1+h)v^2+(\Bop h)^2+h(\Nop h)^2\right]\dd x,
\end{equation}
introduced in \cite{AlonsoOranDuranGranero2024}.
Its role in the singularity argument becomes clearer when it is
written as
\begin{equation}\label{eq:energy-positive}
 \En(h,v)=\frac12\int_\R
 \left[\rho\bigl(v^2+(\Nop h)^2\bigr)+(\Q h)^2\right]\dd x.
\end{equation}
This identity, proved in Lemma~\ref{lem:Hamiltonian}, shows that
positive total density makes every term in the integral nonnegative. To express the initial condition, we set
\begin{equation}\label{eq:initial-constants}
 \En_0=\En(h_0,v_0),
 \qquad
 K_0=2+\sqrt{2\En_0}.
\end{equation}

\begin{Theorem}\label{thm:main}
Let \((h_0,v_0)\in H^2(\R)\times H^3(\R)\), with \(h_0\) even,
\(v_0\) odd, and \(\rho_0=1+h_0\ge\rho_*>0\). Denote by
\(T_{\max}\) the maximal lifespan of the corresponding
\(H^2\times H^3\) solution \((h,v)\). If \(y_0=-v_0'(0)\) satisfies
\begin{equation}\label{eq:compression-criterion}
 y_0>\sqrt{2K_0\rho_0(0)+\frac{\En_0}{2}},
\end{equation}
then \(\En_0>0\) and
\begin{equation}\label{eq:lifetime-bound}
 T_{\max}\le T_*:=
 \frac1c\log\left(\frac{y_0+c}{y_0-c}\right)<\infty,
 \qquad c=\sqrt{\frac{\En_0}{2}}.
\end{equation}
Moreover, the velocity gradient becomes unbounded:
\begin{equation}\label{eq:slope-norm-breakdown}
 \limsup_{t\uparrow T_{\max}}\norm{v_x(t)}_{L^\infty}=\infty.
\end{equation}
\end{Theorem}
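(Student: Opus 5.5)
The plan is to follow the compression rate at the symmetry point, $y(t)=-v_x(t,0)$, and derive a Riccati-type differential inequality for it. The bounded parts of the nonlocal forcing will be controlled by the conserved Hamiltonian, and the part that grows with the density by a Lyapunov-type invariant.

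First I would use the preservation results of Section~\ref{sec:preliminaries}. They give that $h(t)$ stays even, $v(t)$ stays odd and $\rho>0$ on $[0,T_{\max})$. Evenness of $h$ makes $\Nop h$ odd and $\Q h$ even, and oddness of $v$ gives $v(t,0)=0$. Evaluating the continuity equation \eqref{eq:continuity} at $x=0$, where $\rho_x=0$ and $v=0$, yields
\[
\frac{d}{dt}\rho(t,0)=\rho(t,0)\,y(t).
\]
Next I would differentiate the velocity equation in $x$ and evaluate at $x=0$. I would use $\partial_x\Nop=-\Lop=\Q-I$ and $[\Lop,f]g=f\,\Q g-\Q(fg)$. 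Since $\Nop h(t,0)=0$, the terms regroup as
\[
y'=y^2+\bigl(h-\Q h\bigr)\bigl(1-\Q h\bigr)(t,0)-\Nop(h\,\Nop h)(t,0).
\]
In terms of $\rho=1+h$, the forcing equals $\rho(0)(1-\Q h(0))$ plus terms quadratic in $\Q h$, $\Nop h$, weighted by $\rho$, and absolute constants. To estimate $\Nop(h\Nop h)(0)=\Nop(\rho\Nop h)(0)-\Nop\Nop h(0)$ I would write $\Nop=G'*$ and use $\abs{G'}=G$, $G\ge0$ and positivity of $\rho$. Cauchy--Schwarz with weight $\rho$ then gives
\[
\abs{\Nop(\rho\Nop h)(0)}\le\Bigl(\int G\rho\Bigr)^{1/2}\Bigl(\int\rho(\Nop h)^2\Bigr)^{1/2}.
\]
The second factor is at most $\sqrt{2\En_0}$ by conservation of \eqref{eq:energy-positive} (Lemma~\ref{lem:Hamiltonian}). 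The first factor is $\Q\rho(0)=1+\Q h(0)$, and $\abs{\Q h}_{L^\infty}$ is controlled by $\norm{\Q h}_{L^2}\le\sqrt{2\En_0}$ and the $H^1$ smoothing of $\Q$.

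The target of this step is the inequality
\[
y'\ge y^2-K_0\,\rho(t,0)-c^2,\qquad c^2=\En_0/2,
\]
valid whenever $y>0$. I expect this to be the main obstacle. The sign of $1-\Q h(0)$ is not fixed, and the terms must be absorbed with exactly the constants $K_0=2+\sqrt{2\En_0}$ and $\En_0/2$. This will take careful bookkeeping, using $ab\le\frac12(a^2+b^2)$ and $\rho(0)\ge\rho_*>0$. It is also where $\En_0>0$ enters: the criterion forces $y_0>0$, hence $v_0\ne0$, and then every term of \eqref{eq:energy-positive} is nonnegative with the $\rho v^2$ term positive.

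For the Lyapunov step I would set $w=y^2-2K_0\rho(0)-c^2$, which is positive at $t=0$ by \eqref{eq:compression-criterion}. Combining the ODE for $\rho(0)$ with the Riccati inequality gives $w'\ge 2y\,w+2yc^2\ge 2y\,w$ while $y>0$. A continuity argument then shows that $w>0$ and $y>y_0>0$ persist. While $w>0$ we have $K_0\rho(0)<(y^2-c^2)/2$, so
\[
y'\ge\tfrac12\,(y^2-c^2).
\]
Comparing with the explicit solution of this Riccati ODE, which blows up exactly at $T_*=\frac1c\log\frac{y_0+c}{y_0-c}$, shows that $y$ cannot remain finite beyond $T_*$. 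Hence $T_{\max}\le T_*$.

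Finally, $\norm{v_x}_{L^\infty}\ge y(t)$, and the continuation criterion of Appendix~\ref{app:continuation} says that the solution extends as long as $\norm{v_x}_{L^\infty}$ stays bounded. Together these give \eqref{eq:slope-norm-breakdown}.
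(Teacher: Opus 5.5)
\textbf{Verdict: genuine gap.} The key inequality $y'\ge y^2-K_0\rho(t,0)-c^2$, with exactly these constants, is not established. This is the step you yourself call the main obstacle, and the route you sketch for it contains errors. The rest of your argument is the paper's: the ODE for $\rho(t,0)$, the invariant $w=y^2-2K_0\rho(t,0)-c^2$, the Riccati comparison and the continuation criterion.

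\emph{Sign error.} Your formula for $y'$ has the wrong sign in its linear part. Since $\Cop(h)=(1+\Q h)\Nop h-\Q(h\Nop h)$ and $\partial_x\Nop h=\Q h-h$, one gets
\[
y'=y^2+\bigl(\Q h-h\bigr)\bigl(1+\Q h\bigr)(t,0)-\Nop(h\Nop h)(t,0).
\]
Because $\Q h-h=\Q\rho-\rho$, the pointwise density enters only through $-\rho(t,0)\,\Q\rho(t,0)$. Its coefficient has a fixed sign and lies in $[-K_0,0)$, so the ``sign of $1-\Q h(0)$'' problem does not exist.

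\emph{Bound on $\Q\rho(t,0)$.} The upper bound on $\Q\rho(t,0)=1+\Q h(t,0)$ cannot come from $H^1$ smoothing of $\Q$. That would require $\norm{\Nop h}_{L^2}$, whereas the Hamiltonian only controls $\int\rho(\Nop h)^2$, and $\rho$ has no uniform lower bound. What works is Lemma~\ref{lem:kernel-bound}: $G\le 2\,G*G$ together with $\rho>0$ gives $0<\Q\rho\le 2+2\Q^2h$.

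\emph{The decisive gap: the term $\Nop(h\Nop h)(0)$.} Your weighted Cauchy--Schwarz on $\Nop(\rho\Nop h)(0)$ discards a favourable sign. The bounds you list do not give the constant $c^2=\En_0/2$. For instance, take $\En_0=2$, $\Q h(t,0)=0$, $\norm{\Q h}_{L^2}^2=\frac14$ and $\int\rho(\Nop h)^2=\frac{15}{4}$. Then they only yield $y'\ge y^2-\rho(t,0)-\bigl(\frac{\sqrt{15}}{2}-\frac34\bigr)$, which is weaker than $y^2-K_0\rho(t,0)-1$ once $\rho(t,0)$ is small. So as written you would prove only a version of the theorem with a larger constant, hence a different criterion and lifespan.

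The missing idea is the exact-derivative identity of Lemma~\ref{lem:force}, $h\Nop h=\frac12\partial_x[(\Q h)^2-(\Nop h)^2]$. It gives $-\Nop(h\Nop h)=\frac12\Lop[(\Q h)^2-(\Nop h)^2]$, and hence, with everything evaluated at $(t,0)$,
\[
y'-y^2=\bigl(1+\Q h\bigr)^2-\rho\bigl(1+\Q h\bigr)+\tfrac12(\Q h)^2-\tfrac12\Q\bigl[(\Q h)^2\bigr]+\tfrac12\Q\bigl[(\Nop h)^2\bigr].
\]
From here the inequality follows directly:
\begin{itemize}
\item The $\Nop h$ contribution is nonnegative and can simply be dropped, so no control of $\Nop h$ is needed at all.
\item The bound $\frac12\Q[(\Q h)^2](t,0)\le\frac14\norm{\Q h}_{L^2}^2\le\En_0/2=c^2$ produces exactly the theorem's constant.
\item The term $-\rho(1+\Q h)$ is bounded below by $-K_0\rho(t,0)$ using Lemma~\ref{lem:kernel-bound}.
\item The two squares $\bigl(1+\Q h\bigr)^2$ and $\tfrac12(\Q h)^2$ are dropped.
\end{itemize}

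Your decomposition can be rescued, but it takes three extra ingredients. You need the sharper weight $G\le\frac12$ inside Cauchy--Schwarz, a specific AM--GM split, and an explicit trade-off between $\int\rho(\Nop h)^2$ and $\norm{\Q h}_{L^2}^2$ within $2\En_0$. That is precisely the bookkeeping you left undone.

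A minor slip: the computation gives $w'\ge 2y\,w$, not $2y\,w+2yc^2$. This is all you need anyway.
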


\begin{Remark}\label{rem:local-theory}
The local well-posedness result in
\cite[Section~4]{AlonsoOranDuranGranero2024} provides, for every
\((h_0,v_0)\in H^2(\R)\times H^3(\R)\), a unique maximal solution
satisfying
\begin{equation}\label{eq:solution-class}
 (h,v)\in C\bigl([0,T_{\max});H^2\times H^3\bigr)
 \cap C^1\bigl([0,T_{\max});H^1\times H^2\bigr).
\end{equation}
The time regularity follows directly from the equations and
Sobolev product estimates. In particular, the embeddings
\(H^1(\R)\hookrightarrow C_b^0(\R)\),
\(H^2(\R)\hookrightarrow C_b^1(\R)\), and
\(H^3(\R)\hookrightarrow C_b^2(\R)\) justify the pointwise
identities and the time differentiation of \(\rho(t,0)\) and
\(v_x(t,0)\) used below. For completeness, Appendix~\ref{app:continuation} develops the
symmetrized energy estimates underlying the local theory and
provides the continuation argument, which was not explicitly
addressed in \cite{AlonsoOranDuranGranero2024}. In particular,
if \(T_{\max}<\infty\), then
\begin{equation}\label{eq:continuation-alternative}
 \limsup_{t\uparrow T_{\max}}
 \left(\norm{h(t)}_{H^2}+\norm{v(t)}_{H^3}\right)=\infty.
\end{equation}
Theorem~\ref{thm:continuation} establishes the integral
criterion \eqref{eq:appendix-continuation-criterion}.
\end{Remark}

\begin{Remark}
The theorem establishes finite-time singularity formation, but the
argument does not identify the spatial location of the first singularity.
Moreover, no uniform bound for the density perturbation \(h\)
up to \(T_{\max}\) is asserted, and simultaneous blow-up of
the density is in principle not excluded.
\end{Remark}

\section{Preliminary identities and estimates}\label{sec:preliminaries}
This section collects the preliminary identities and estimates
used in the proof of Theorem~\ref{thm:main}. We include the
computations to make the argument self-contained and easier to
follow. Throughout, \((h,v)\) denotes the maximal solution on
\([0,T_{\max})\) described in Remark~\ref{rem:local-theory}. \medskip

We begin by showing that the even-odd symmetry of the initial
data is preserved by the evolution.

\begin{Lemma}\label{lem:parity}
If \(h_0\) is even and \(v_0\) is odd, then, for every
\(t\in[0,T_{\max})\), \(h(t,\cdot)\) is even and \(v(t,\cdot)\)
is odd. In particular,
\begin{equation}\label{eq:origin-vanishing}
 v(t,0)=h_x(t,0)=(\Nop h)(t,0)=0.
\end{equation}
\end{Lemma}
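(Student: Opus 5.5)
The argument is the usual reflection argument combined with uniqueness of the maximal solution (Remark~\ref{rem:local-theory}). Define the reflected pair
\[
 \tilde h(t,x)=h(t,-x),\qquad \tilde v(t,x)=-v(t,-x),\qquad t\in[0,T_{\max}).
\]
We will check three things. First, $(\tilde h,\tilde v)$ lies in the solution class \eqref{eq:solution-class}. Second, it solves \eqref{eq:system-intro}. Third, it has the same initial data as $(h,v)$. Uniqueness then gives $\tilde h=h$ and $\tilde v=v$ on $[0,T_{\max})$, which is exactly the claimed parity.

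\textbf{Membership in the class and initial data.} Let $R f(x)=f(-x)$ denote the reflection. It is an isometry of every $H^s$, so membership in the class is immediate. The initial data agree because $h_0$ is even and $v_0$ is odd: $\tilde h(0)=Rh_0=h_0$ and $\tilde v(0)=-Rv_0=v_0$.

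\textbf{Commutation relations.} The key facts concern how $R$ interacts with the operators. On the Fourier side $R$ acts by $\xi\mapsto-\xi$. Since the symbols of $\Q$ and $\Lop$ are even, these operators commute with $R$. The symbol of $\Nop$ is odd, so $\Nop R=-R\Nop$, and likewise $\partial_x R=-R\partial_x$. Alternatively, one can read these off from the kernel representation \eqref{eq:Helmholtz-kernel}, using that $G$ is even and $G'$ is odd.

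\textbf{Checking the equations.} We substitute the reflected pair into the system.
\begin{itemize}
\item For the first equation:
\[
 \tilde h_t+(\tilde h\tilde v)_x+\tilde v_x=R\bigl(h_t+(hv)_x+v_x\bigr)=0.
\]
Each term picks up one sign from $-R$ in $\tilde v$ and another from $\partial_x R=-R\partial_x$, so the signs cancel.
\item In the second equation, $\tilde v_t+\tilde v\tilde v_x=-R(v_t+vv_x)$, since $\tilde v\tilde v_x=(-Rv)(Rv_x)$.
\item For the nonlocal term:
\[
 [\Lop,\Nop\tilde h]\tilde h=\Lop\bigl((-R\Nop h)(Rh)\bigr)+(R\Nop h)\,\Lop Rh=-R\bigl([\Lop,\Nop h]h\bigr).
\]
\item Finally, $\Nop\tilde h=-R\Nop h$.
\end{itemize}
Hence the whole second equation for $(\tilde h,\tilde v)$ equals $-R$ applied to the original second equation, and so it vanishes. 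This bookkeeping of signs, especially in the commutator, is the only step needing care. It is where the odd symbol of $\Nop$ pairs with the odd reflection of $v$; I expect no real obstacle beyond that.

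\textbf{The identities at the origin.} Once parity is established, \eqref{eq:origin-vanishing} follows from pointwise values, which are justified by the embeddings $H^2\hookrightarrow C^1_b$ and $H^3\hookrightarrow C^2_b$:
\begin{itemize}
\item an odd continuous function vanishes at $0$, giving $v(t,0)=0$;
\item $h_x$ is odd because $h$ is even, giving $h_x(t,0)=0$;
\item $\Nop h=-R\Nop h$ is odd and continuous, since $\Nop h\in H^3$, giving $(\Nop h)(t,0)=0$.
\end{itemize}
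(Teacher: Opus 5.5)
Your proposal is correct and follows the paper's proof. In both, the reflected pair $(h(t,-x),-v(t,-x))$ solves the system because $\Q,\Lop$ have even symbols and $\Nop$ has an odd one, so uniqueness forces the parity, and the vanishing at the origin then follows from oddness and continuity. Your sign bookkeeping checks out, including $[\Lop,\Nop\tilde h]\tilde h=-R\bigl([\Lop,\Nop h]h\bigr)$, and obtaining oddness of $\Nop h$ directly from $\Nop R=-R\Nop$ is equivalent to the paper's route via $\Nop h=\partial_x\Q h$ with $\Q h$ even.
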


\begin{proof}
Since \(\Q\) and \(\Lop\) have even Fourier symbols and \(\Nop\)
has an odd symbol, direct substitution shows that $
 \bigl(h(t,-x),-v(t,-x)\bigr)$
also solves \eqref{eq:system-intro}. The assumed parity of
\(h_0\) and \(v_0\) implies that this solution has the same
initial data as \((h,v)\). Uniqueness therefore gives
$
 h(t,-x)=h(t,x),\qquad v(t,-x)=-v(t,x).
$
Finally, \(\Q h\) is even because its convolution kernel is even.
Thus \(v\), \(h_x\), and \(\Nop h=\partial_x\Q h\) are odd
and vanish at \(x=0\).
\end{proof}

We next show that the total density remains positive throughout
the lifespan of the solution, as a consequence of the continuity
equation.

\begin{Lemma}\label{lem:positive-density}
If \(\rho_0=1+h_0\ge\rho_*>0\), then
\(\rho(t,x)>0\) for every \(t<T_{\max}\) and \(x\in\R\).
\end{Lemma}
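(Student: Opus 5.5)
We follow the method of characteristics applied to the continuity equation \eqref{eq:continuity}, $\rho_t+(\rho v)_x=0$. Fix $T<T_{\max}$. By \eqref{eq:solution-class} and the embedding $H^3(\R)\hookrightarrow C_b^2(\R)$, the velocity $v$ lies in $C([0,T];C_b^1(\R))$, with $v_x$ bounded on $[0,T]\times\R$. The ODE
\[
 \frac{d}{dt}X(t,\alpha)=v(t,X(t,\alpha)),\qquad X(0,\alpha)=\alpha,
\]
therefore has a unique global solution on $[0,T]$ for each $\alpha\in\R$. The map $\alpha\mapsto X(t,\alpha)$ is a $C^1$ diffeomorphism of $\R$: it is injective by uniqueness of trajectories, surjective because trajectories can be solved backwards in time, and its Jacobian
\[
 \partial_\alpha X(t,\alpha)=\exp\Bigl(\int_0^t v_x(s,X(s,\alpha))\dd s\Bigr)
\]
is strictly positive.

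Next we write the continuity equation along characteristics as $\rho_t+v\rho_x=-v_x\rho$. Since $h\in C^1([0,T);H^1)\cap C([0,T);H^2)$, the function $\rho=1+h$ is $C^1$ in $x$, and $\rho(\cdot,x)$ is differentiable in time with values in $H^1\hookrightarrow C_b^0$. The chain rule then gives
\[
 \frac{d}{dt}\rho(t,X(t,\alpha))=-v_x(t,X(t,\alpha))\,\rho(t,X(t,\alpha)).
\]
This is a linear ODE, so it integrates explicitly:
\[
 \rho(t,X(t,\alpha))=\rho_0(\alpha)\exp\Bigl(-\int_0^t v_x(s,X(s,\alpha))\dd s\Bigr)\ge \rho_*e^{-t\sup_{[0,t]}\norm{v_x}_{L^\infty}}>0.
\]
Equivalently, $\rho(t,X)\,\partial_\alpha X=\rho_0(\alpha)$. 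Because every $x\in\R$ equals $X(t,\alpha)$ for some $\alpha$, positivity holds everywhere. Since $T<T_{\max}$ was arbitrary, this proves the lemma, and in fact gives a positive lower bound on each compact subinterval of $[0,T_{\max})$.

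The main obstacle is rigor. The solution has only limited time regularity: $\rho$ is $C^1$ in time only with values in $H^1$, and $C^1$ in space. The composition $t\mapsto\rho(t,X(t,\alpha))$ therefore has to be differentiated carefully. We handle this by writing the difference quotient as
\[
 \frac{\rho(t+\delta,X(t+\delta))-\rho(t+\delta,X(t))}{\delta}+\frac{\rho(t+\delta,X(t))-\rho(t,X(t))}{\delta},
\]
and passing to the limit in each term. The first term converges to $\rho_x v$, using the mean value theorem and the uniform continuity of $\rho_x\in C([0,T];C_b^0)$. The second converges to $\rho_t$, since the time derivative converges in $H^1\subset C_b^0$.

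If this step turns out to be delicate, an alternative is to mollify the initial data. The same identity then holds for smooth solutions, and we pass to the limit using continuous dependence. The direct argument above should suffice, however.
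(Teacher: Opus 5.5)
Your proposal is correct and follows essentially the same argument as the paper: the characteristic flow of $v$, surjectivity of $X(t,\cdot)$, and explicit integration of the linear ODE to get $\rho(t,X(t,\alpha))=\rho_0(\alpha)\exp\bigl(-\int_0^t v_x(s,X(s,\alpha))\dd s\bigr)>0$. The differences are minor: you get surjectivity by solving backward in time, whereas the paper uses $\abs{X(t,\xi)-\xi}\le\int_0^t\norm{v(s)}_{L^\infty}\dd s$ and continuity, and you also justify the chain rule along characteristics and record a quantitative lower bound.
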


\begin{proof}
Consider the characteristic flow
\begin{equation}\label{eq:characteristics}
 \partial_tX(t,\xi)=v(t,X(t,\xi)),\qquad X(0,\xi)=\xi.
\end{equation}
For each \(T<T_{\max}\), the solution class
\eqref{eq:solution-class} and Sobolev embedding give
\[
 \sup_{0\le t\le T}
 \left(\norm{v(t)}_{L^\infty}
       +\norm{v_x(t)}_{L^\infty}\right)<\infty.
\]
Thus \(v\) is continuous in time and uniformly Lipschitz in space
on \([0,T]\), so \eqref{eq:characteristics} has a unique solution
for every \(\xi\in\R\). Differentiating
\eqref{eq:characteristics} with respect to \(\xi\), we obtain
\[
 \partial_t(\partial_\xi X)
 =v_x(t,X(t,\xi))\,\partial_\xi X,
 \qquad \partial_\xi X(0,\xi)=1.
\]
Thus, we have that
\[
 \partial_\xi X(t,\xi)
 =\exp\left(\int_0^t v_x(s,X(s,\xi))\dd s\right)>0.
\]
Moreover,
\(\abs{X(t,\xi)-\xi}\le\int_0^t\norm{v(s)}_{L^\infty}\dd s\), so
\(X(t,\cdot)\) maps \(\R\) onto itself. Along this flow, the continuity
equation gives
\[
 \frac{\dd}{\dd t}\rho(t,X(t,\xi))
 =-\rho(t,X(t,\xi))v_x(t,X(t,\xi)).
\]
Hence
\begin{equation}\label{eq:rho-characteristic}
 \rho(t,X(t,\xi))
 =\rho_0(\xi)
 \exp\left(-\int_0^t v_x(s,X(s,\xi))\dd s\right)>0,
\end{equation}
which concludes the proof.
\end{proof}

We next rewrite the nonlocal term in the velocity equation in a
form that will be useful for the estimates below. Set
\[
 \Cop(h)=[\Lop,\Nop h]h+\Nop h.
\]

\begin{Lemma}\label{lem:force}
For every \(h\in H^2(\R)\), we have the identity
\begin{align}
 \Cop(h)
 &=(1+\Q h)\Nop h-\Q(h\Nop h)=(1+\Q h)\Nop h
 -\frac12\Nop\left[(\Q h)^2-(\Nop h)^2\right].
 \label{eq:force-reduction}
\end{align}
\end{Lemma}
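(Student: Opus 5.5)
The identity \eqref{eq:force-reduction} is purely algebraic, so the plan is a direct computation using only the relations collected in \eqref{eq:operator-definitions}. These are \(\Lop=I-\Q\), \(\Nop=\partial_x\Q\), \(\partial_x\Nop=-\Lop\), and the fact that all operators commute with \(\partial_x\). For \(h\in H^2\) every expression below lies at least in \(H^1\), so all products and operator applications are justified.

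\emph{First equality.} Write \(f=\Nop h\) and expand the commutator using \(\Lop=I-\Q\):
\[
 [\Lop,f]h=(I-\Q)(fh)-f(I-\Q)h=fh-\Q(fh)-fh+f\Q h=f\Q h-\Q(hf).
\]
Adding \(\Nop h=f\) then gives
\[
 \Cop(h)=(1+\Q h)\Nop h-\Q(h\Nop h),
\]
which is the first form.

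\emph{Second equality.} It remains to show
\[
 \Q(h\Nop h)=\tfrac12\Nop\bigl[(\Q h)^2-(\Nop h)^2\bigr].
\]
Set \(u=\Q h\). Then \(h=u-u_{xx}\) and \(\Nop h=\partial_x\Q h=u_x\), so
\[
 h\Nop h=(u-u_{xx})u_x=\tfrac12\partial_x\bigl(u^2-u_x^2\bigr).
\]
Applying \(\Q\) and using \(\Q\partial_x=\partial_x\Q=\Nop\) yields
\[
 \Q(h\Nop h)=\tfrac12\Nop\bigl(u^2-u_x^2\bigr)=\tfrac12\Nop\bigl[(\Q h)^2-(\Nop h)^2\bigr],
\]
as required.

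There is no real obstacle here. The only point needing a little care is regularity: with \(h\in H^2\) we have \(u\in H^4\), so \(u_{xx}u_x=\frac12\partial_x(u_x^2)\) holds classically. Also \(u^2-u_x^2\in H^3\), so \(\Nop\) applied to it makes sense and commutes with \(\partial_x\) as a Fourier multiplier.
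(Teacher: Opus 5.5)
Your proposal is correct and follows the paper's proof essentially verbatim: you expand the commutator via \(\Lop=I-\Q\), then write \(h=\Q h-\partial_x^2\Q h\) and \(\Nop h=\partial_x\Q h\) to identify \(h\Nop h=\tfrac12\partial_x\bigl[(\Q h)^2-(\Nop h)^2\bigr]\), and finally apply \(\Q\) using \(\Q\partial_x=\Nop\). Your added regularity remarks (\(\Q h\in H^4\) and \((\Q h)^2-(\Nop h)^2\in H^3\)) are accurate and make the justification slightly more explicit than the paper does.
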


\begin{proof}
Since \(\Lop=I-\Q\), expanding the commutator gives
\[
 [\Lop,\Nop h]h
 =-\Q(h\Nop h)+(\Nop h)\Q h,
\]
which proves the first identity. To obtain the second identity, write
\(h=\Q h-\partial_x^2\Q h\) and \(\Nop h=\partial_x\Q h\).
Then
\begin{equation}\label{eq:hux-identity}
\begin{aligned}
 h\Nop h=(\Q h-\partial_x^2\Q h)\,\partial_x\Q h&=\frac12\partial_x
 \left[(\Q h)^2-(\partial_x\Q h)^2\right]\\
 &=\frac12\partial_x
 \left[(\Q h)^2-(\Nop h)^2\right].
\end{aligned}
\end{equation}
Applying \(\Q\) and using \(\Q\partial_x=\Nop\) completes
the proof.
\end{proof}

We next use the conserved Hamiltonian to control \(\Q h\).
The key observation is that, when the total density is positive,
the Hamiltonian can be written as a sum of nonnegative terms.

\begin{Lemma}\label{lem:Hamiltonian}
The Hamiltonian \eqref{eq:Hamiltonian-native} satisfies
\[
 \En(h(t),v(t))=\En_0,\qquad 0\le t<T_{\max},
\]
and admits the representation \eqref{eq:energy-positive}.
In particular, if \(\rho_0\ge\rho_*>0\), then
\begin{equation}\label{eq:Qh-L2-bound}
 \norm{\Q h(t)}_{L^2}^2\le2\En_0,
 \qquad 0\le t<T_{\max}.
\end{equation}
\end{Lemma}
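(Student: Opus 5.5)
The plan has three parts: prove the representation \eqref{eq:energy-positive} by an algebraic identity, prove conservation by differentiating in time, and then read off \eqref{eq:Qh-L2-bound}.

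\emph{Representation.} Write $h=\Q h-\partial_x^2\Q h$ and $\Nop h=\partial_x\Q h$. Then
\[
\int(\Bop h)^2\dd x=\int h\,\Q h\dd x=\int\bigl[(\Q h)^2+(\partial_x\Q h)^2\bigr]\dd x=\int\bigl[(\Q h)^2+(\Nop h)^2\bigr]\dd x,
\]
where the middle step integrates by parts in $-\int \partial_x^2\Q h\,\Q h$. Adding $\int h(\Nop h)^2$ gives $\int[\rho(\Nop h)^2+(\Q h)^2]$. Combined with $(1+h)v^2=\rho v^2$, this is exactly \eqref{eq:energy-positive}.

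\emph{Conservation.} I will differentiate the form \eqref{eq:energy-positive} along the solution, which is justified by the regularity class \eqref{eq:solution-class}. Working at the $H^2\times H^3$ level, every pairing below is an $L^2$ pairing of $C^1$-in-time $H^1$ functions.
\begin{itemize}
\item The variational derivatives are $\partial_v\En=\rho v$ and $\partial_h\En=\tfrac12 v^2+\Q h+\tfrac12(\Nop h)^2+\Q^{\ast}$-terms coming from $\int h(\Nop h)^2$. The cleaner route is to compute these directly.
\item $\frac{d}{dt}\frac12\int\rho v^2=\int[\tfrac12\rho_t v^2+\rho v v_t]$. Using $\rho_t=-(\rho v)_x$, integrating by parts, and using $v_t=-vv_x-\Cop(h)$, this reduces to $-\int\rho v\,\Cop(h)$.
\item For the remaining terms, $\frac{d}{dt}\frac12\int[(\Q h)^2+\rho(\Nop h)^2]$ equals $\int h_t\,[\Q^2 h+\tfrac12(\Nop h)^2]-\int \Nop\bigl(\rho\Nop h\bigr)\,h_t$, using $\Q^\ast=\Q$ and $\Nop^\ast=-\Nop$. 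Substituting $h_t=-(\rho v)_x$ and integrating by parts gives $\int\rho v\,\partial_x[\Q^2h+\tfrac12(\Nop h)^2-\Nop(\rho\Nop h)]$.
\item It then remains to check the identity $\partial_x[\Q^2h+\tfrac12(\Nop h)^2-\Nop(\rho\Nop h)]=\Cop(h)$.
\end{itemize}

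\emph{Key identity and main obstacle.} Using $\partial_x\Nop=-\Lop=\Q-I$, the left side is
\[
\Nop\Q h+\Nop h\,\partial_x\Nop h+(I-\Q)(\rho\Nop h)=\Nop\Q h+\rho\Nop h-\Nop h\,\Lop h-\Q(\Nop h)-\Q(h\Nop h).
\]
Since $\Nop\Q h-\Q\Nop h=0$ and $\Lop h=h-\Q h$, we get $\rho\Nop h-\Nop h\,(h-\Q h)=(1+\Q h)\Nop h$. The whole expression is therefore $(1+\Q h)\Nop h-\Q(h\Nop h)=\Cop(h)$ by Lemma~\ref{lem:force}.

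The main obstacle is this bookkeeping: signs and adjoints of $\Nop$, and confirming that $\Q^2h$ and $\Q\Nop h$ combine correctly. I need to recheck that the $h$-derivative of $\frac12\int(\Q h)^2$ is $\Q^2h$. This agrees with $\Bop h$ only after adding the $(\Nop h)^2$ part, since $\Q^2+\Nop^\ast\Nop=\Q^2-\Nop^2=\Q$. So I will actually differentiate the native form \eqref{eq:Hamiltonian-native} in $h$, where the $\Bop$ term contributes $\Q h$, which simplifies the check. Once the derivative vanishes, $\En(h(t),v(t))=\En_0$.

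\emph{Bound on $\Q h$.} By Lemma~\ref{lem:positive-density}, $\rho>0$, so every term in \eqref{eq:energy-positive} is nonnegative. Hence $\frac12\norm{\Q h}_{L^2}^2\le\En_0$, which is \eqref{eq:Qh-L2-bound}.
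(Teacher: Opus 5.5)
Your argument is correct. The representation \eqref{eq:energy-positive} and the bound \eqref{eq:Qh-L2-bound} are obtained exactly as in the paper: integrate $\int h\,\Q h\dd x$ by parts using $h=(1-\partial_x^2)\Q h$, then drop the nonnegative $\rho$-weighted terms using Lemma~\ref{lem:positive-density}.

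The real difference is conservation. The paper simply cites \cite{AlonsoOranDuranGranero2024}, whereas you verify it directly. You reduce $\frac{\dd}{\dd t}\En$ to $\int\rho v\,\bigl(\partial_x F-\Cop(h)\bigr)\dd x$ with $F=\Q^2h+\frac12(\Nop h)^2-\Nop(\rho\Nop h)$, and then check $\partial_x F=\Cop(h)$ using the first identity of Lemma~\ref{lem:force}. The signs and adjoints all work out. This costs a few lines, but it buys three things:
\begin{itemize}
\item it makes the lemma self-contained;
\item it shows that conservation is exactly the Hamiltonian form $h_t=-\partial_x(\delta\En/\delta v)$, $v_t=-\partial_x(\delta\En/\delta h)$ of the system;
\item it indicates why differentiating $\En$ in time is legitimate in the class \eqref{eq:solution-class}, which the paper leaves implicit.
\end{itemize}

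Two cosmetic remarks. First, the nonlocal contribution mentioned in your first bullet is $-\Nop(h\Nop h)$, not a $\Q^{\ast}$-term. Second, your closing hesitation about $\Q^2h$ versus $\Q h$ is unnecessary. The term $-\Nop(\rho\Nop h)$ already contains $-\Nop^2h$, and $\Q^2-\Nop^2=\Q$. So the computation you wrote is already consistent with the native form, and no switch of form is needed.
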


\begin{proof}
Conservation of \(\En\) was established in
\cite{AlonsoOranDuranGranero2024}. To obtain the positive
representation, use \(\Bop^2=\Q\) and
\(h=(1-\partial_x^2)\Q h\). Integration by parts gives
\begin{align*}
 \int_\R(\Bop h)^2\dd x=\int_\R h\,\Q h\dd x
 =\int_\R\left[(\Q h)^2+(\partial_x\Q h)^2\right]\dd x=\int_\R\left[(\Q h)^2+(\Nop h)^2\right]\dd x.
\end{align*}
Substituting into \eqref{eq:Hamiltonian-native}, we find
\[
 \En(h,v)=\frac12\int_\R
 \left[\rho\bigl(v^2+(\Nop h)^2\bigr)+(\Q h)^2\right]\dd x,
\]
which proves \eqref{eq:energy-positive}. If
\(\rho_0\ge\rho_*>0\), Lemma~\ref{lem:positive-density}
ensures that \(\rho>0\) throughout the lifespan. Dropping
the nonnegative weighted terms and using conservation of
\(\En\) yields \eqref{eq:Qh-L2-bound}.
\end{proof}

The positivity of the density and the form of the Helmholtz
kernel allow us to turn \eqref{eq:Qh-L2-bound} into a uniform
bound at the origin. Indeed, comparing the kernels of \(\Q\) and
\(\Q^2\) allows us to apply Cauchy--Schwarz to the quantity \(\Q h\)
controlled by the energy.

\begin{Lemma}\label{lem:kernel-bound}
If \(\rho_0\ge\rho_*>0\), then
\begin{equation}\label{eq:kernel-bound}
 0<1+(\Q h)(t,0)\le K_0,
\end{equation}
for every \(0\le t<T_{\max}\), where \(K_0\) is defined in
\eqref{eq:initial-constants}.
\end{Lemma}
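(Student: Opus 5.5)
The plan is to write everything in terms of the total density \(\rho=1+h\), which is positive by Lemma~\ref{lem:positive-density}, and to use the convolution representation \eqref{eq:Helmholtz-kernel}. By \eqref{eq:continuity}, \(1+\Q h=\Q\rho=G*\rho\), so
\[
 1+(\Q h)(t,0)=\int_\R G(y)\rho(t,y)\dd y .
\]

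\textbf{Lower bound.} This is immediate. The kernel \(G\) is strictly positive everywhere. By Lemma~\ref{lem:positive-density}, \(\rho(t,\cdot)>0\) everywhere, and \(\rho\) is continuous since \(h\in H^2\). Hence the integral above is strictly positive.

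\textbf{Upper bound.} Following the hint about comparing the kernels of \(\Q\) and \(\Q^2\), I would compute the kernel of \(\Q^2\) explicitly:
\[
 (G*G)(x)=\tfrac14(1+\abs{x})e^{-\abs{x}} .
\]
This satisfies the pointwise comparison
\[
 G(x)=\tfrac12 e^{-\abs{x}}\le 2\,(G*G)(x).
\]
Since \(\rho\ge0\), this comparison can be integrated against \(\rho\):
\[
 (\Q\rho)(t,0)\le 2(\Q^2\rho)(t,0).
\]
Next, \(\Q 1=1\) because \(\int G=1\), so \(\Q^2\rho=1+\Q^2 h=1+\Q(\Q h)\). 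Therefore
\[
 1+(\Q h)(t,0)\le 2+2\int_\R G(y)\,(\Q h)(t,y)\dd y .
\]

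The last integral is now in a form the energy controls. By Cauchy--Schwarz and \(\norm{G}_{L^2}^2=\tfrac14\int e^{-2\abs{x}}\dd x=\tfrac14\), it is bounded by \(\tfrac12\norm{\Q h(t)}_{L^2}\). Lemma~\ref{lem:Hamiltonian} then gives \(\norm{\Q h(t)}_{L^2}\le\sqrt{2\En_0}\). Altogether,
\[
 1+(\Q h)(t,0)\le 2+\sqrt{2\En_0}=K_0 .
\]

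\textbf{Main difficulty.} The only nontrivial step is recognizing that the pointwise value \(\Q h(0)\) cannot be bounded directly by \(\norm{\Q h}_{L^2}\). The fix is to trade one factor of \(\Q\) for \(\Q^2\) via the kernel inequality \(G\le 2\,G*G\), and this trade is legitimate only because \(\rho\ge0\). Beyond that, I would just double-check the explicit convolution \(G*G\) and the constant \(\norm{G}_{L^2}=1/2\), since these produce exactly the constant \(K_0\) in \eqref{eq:initial-constants}.
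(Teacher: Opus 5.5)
Your proposal is correct and follows the paper's proof essentially step for step. The lower bound comes from positivity of $G*\rho$. The upper bound uses the kernel comparison $G\le 2\,G*G$, which is valid since $\rho>0$, to get $\Q\rho\le 2+2\Q(\Q h)$. It then closes with Cauchy--Schwarz, using $\norm{G}_{L^2}=1/2$, and the energy bound \eqref{eq:Qh-L2-bound}, which yields exactly $K_0$.
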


\begin{proof}
By Lemma~\ref{lem:positive-density}, \(\rho>0\).
Since \(G>0\) and \(\Q1=1\), we have
\[
 1+\Q h=\Q\rho=G*\rho>0.
\]
A direct computation gives
\[
 (G*G)(x)=\frac14(1+\abs{x})e^{-\abs{x}}
 \ge\frac12G(x).
\]
Therefore convolving with the positive function \(\rho\), we obtain
\[
 0<\Q\rho\le2\Q^2\rho=2+2\Q(\Q h).
\]
Since \(\norm{G}_{L^2}=1/2\), Cauchy--Schwarz and
\eqref{eq:Qh-L2-bound} yield
\[
 \abs{\Q(\Q h)(t,0)}
 \le\norm{G}_{L^2}\norm{\Q h(t)}_{L^2}
 \le\frac12\sqrt{2\En_0}.
\]
Consequently,
\[
 0<1+(\Q h)(t,0)
 \le2+2\abs{\Q(\Q h)(t,0)}
 \le2+\sqrt{2\En_0}=K_0,
\]
which proves the claim.
\end{proof}

\section{Proof of Theorem~\ref{thm:main}}\label{sec:proof}
The estimates below hold on the maximal interval
\([0,T_{\max})\). Moreover, Lemmas~\ref{lem:parity} and
\ref{lem:positive-density} ensure that the symmetry persists and that
\(\rho>0\). We first note that \(\En_0>0\). Otherwise,
\eqref{eq:energy-positive} and \(\rho_0>0\) would give \(v_0=0\) and
\(\Q h_0=0\), hence \(h_0=0\). This contradicts
\eqref{eq:compression-criterion}. Thus \(c=\sqrt{\En_0/2}\) in
\eqref{eq:lifetime-bound} is strictly positive. \medskip

We divide the proof into several steps:

\noindent\underline{Step 1: the density and velocity slope at the origin.}
\par\nopagebreak[4]
By \eqref{eq:solution-class}, both \(h\) and \(v_x\) belong to
\(C^1([0,T_{\max});H^1(\R))\). Since point evaluation is
continuous on \(H^1(\R)\), the functions
\(t\mapsto\rho(t,0)\) and \(t\mapsto v_x(t,0)\) are \(C^1\),
and their time derivatives can be evaluated directly from
the equations.
Since \(v(t,0)=0\), the density equation
gives
\begin{equation}\label{eq:origin-density}
 \frac{\dd}{\dd t}\rho(t,0)=-\rho(t,0)v_x(t,0).
\end{equation}
Similarly, differentiating the velocity equation in space and then
setting \(x=0\) gives
\begin{equation}\label{eq:origin-slope-exact}
 \frac{\dd}{\dd t}v_x(t,0)
 =-v_x(t,0)^2-\partial_x\Cop(h)(t,0).
\end{equation}
We now estimate the last term. Differentiating the equality \eqref{eq:force-reduction} in Lemma \eqref{lem:force} and using that $\partial_x\Nop h=\Q h-h$, \ $\partial_x\Nop=-\Lop$ we find that
\[
 \partial_x\Cop(h)
 =(\Nop h)^2+(1+\Q h)(\Q h-h)
 +\frac12\Lop\left[(\Q h)^2-(\Nop h)^2\right].
\]
Evaluating at the origin, noticing that \((\Nop h)(t,0)=0\) and using the fact that
\(\Lop=I-\Q\) and \(\Q h-h=1+\Q h-\rho\), we find
\begin{align}
 \partial_x\Cop(h)(t,0)
 &=\bigl(1+(\Q h)(t,0)\bigr)^2
 -\rho(t,0)\bigl(1+(\Q h)(t,0)\bigr)\notag\\
 &\quad+\frac12\bigl((\Q h)(t,0)\bigr)^2
 -\frac12\Q\left[(\Q h)^2-(\Nop h)^2\right](t,0).
 \label{eq:origin-force-exact}
\end{align}
The first and third terms are nonnegative. To estimate the
last term, we use positivity of the Helmholtz kernel and the
energy bound \eqref{eq:Qh-L2-bound}:
\begin{align}
 \Q\left[(\Q h)^2-(\Nop h)^2\right](t,0)
 \le\Q\left[(\Q h)^2\right](t,0)&=\frac12\int_\R e^{-\abs{x}}(\Q h)(t,x)^2\dd x\notag\\
 &\le\frac12\norm{\Q h(t)}_{L^2}^2
 \le\En_0.
 \label{eq:origin-convolution-bound}
\end{align}
Since \(0<1+(\Q h)(t,0)\le K_0\) by Lemma~\ref{lem:kernel-bound}
and \(\rho(t,0)>0\), it follows that
\[
 \partial_x\Cop(h)(t,0)\ge-K_0\rho(t,0)-\frac{\En_0}{2}.
\]
Substituting the previous bound in \eqref{eq:origin-slope-exact} yields
\begin{equation}\label{eq:origin-slope-inequality}
 \frac{\dd}{\dd t}v_x(t,0)
 \le-v_x(t,0)^2+K_0\rho(t,0)+\frac{\En_0}{2}.
\end{equation}
The only term still depending on the evolving density is
\(K_0\rho(t,0)\). The next step controls it by the slope itself.

\medskip
\noindent\underline{Step 2: propagation of the initial inequality.} Set \(y(t)=-v_x(t,0)\). Equations
\eqref{eq:origin-density} and \eqref{eq:origin-slope-inequality}
give
\begin{equation}\label{eq:compression-density-system}
 \frac{\dd}{\dd t}\rho(t,0)=\rho(t,0)y(t),
 \qquad
 y'(t)\ge y(t)^2-K_0\rho(t,0)-c^2.
\end{equation}
The initial condition ensures that
\[
 y_0>0,\qquad y_0^2-2K_0\rho_0(0)-c^2>0.
\]
We show that both inequalities persist. As long as
\(y(t)>0\) and \(y(t)^2-2K_0\rho(t,0)-c^2>0\),
\eqref{eq:compression-density-system} yields
\begin{align}
 \frac{\dd}{\dd t}
 \left[y(t)^2-2K_0\rho(t,0)-c^2\right]
 &=2y(t)y'(t)-2K_0\rho(t,0)y(t)\notag\\
 &\ge2y(t)\left[y(t)^2-2K_0\rho(t,0)-c^2\right]
 >0.
 \label{eq:invariant-differential}
\end{align}
Moreover,
\[
 y'(t)\ge
 \left[y(t)^2-2K_0\rho(t,0)-c^2\right]
 +K_0\rho(t,0)>0.
\]
Thus both terms are increasing as long
as they remain positive. Since both are strictly positive
initially, continuity prevents either from reaching zero
at a time strictly smaller than \(T_{\max}\). Consequently,
\begin{equation}\label{eq:invariant-inequality}
 y(t)\ge y_0>0,\qquad
 y(t)^2>2K_0\rho(t,0)+c^2,
 \qquad 0\le t<T_{\max}.
\end{equation}

\medskip
\noindent\underline{Step 3: the lifespan bound.} The preceding estimate gives
\(K_0\rho(t,0)<\tfrac12(y(t)^2-c^2)\). Substituting into
\eqref{eq:compression-density-system}, we obtain
\begin{equation}\label{eq:Riccati}
 y'(t)\ge\frac12\bigl(y(t)^2-c^2\bigr),
 \qquad y(t)\ge y_0>c.
\end{equation}
To integrate this inequality, set
\[
 z(t)=\frac{y(t)+c}{y(t)-c}>1.
\]
Then \eqref{eq:Riccati} implies
\[
 z'(t)
 =-\frac{2c\,y'(t)}{(y(t)-c)^2}
 \le-c\,\frac{y(t)+c}{y(t)-c}
 =-cz(t).
\]
Using the integrating factor\(e^{ct}\) yields
\begin{equation}\label{eq:time-comparison}
 1<z(t)\le z(0)e^{-ct},
 \qquad 0\le t<T_{\max}.
\end{equation}
Consequently, every \(t<T_{\max}\) satisfies
\[
 t<\frac1c\log z(0)
 =\frac1c\log\left(\frac{y_0+c}{y_0-c}\right)
 =T_*.
\]
It follows that \(T_{\max}\le T_*<\infty\).

\medskip
\noindent\underline{Step 4: blow-up of the velocity gradient.} Since \(T_{\max}<\infty\), the continuation alternative in
Remark~\ref{rem:local-theory} gives \eqref{eq:continuation-alternative}.
Moreover, Theorem~\ref{thm:continuation} yields
\[
 \int_0^{T_{\max}}\norm{v_x(t)}_{L^\infty}\dd t=\infty,
\]
and hence
\[
 \limsup_{t\uparrow T_{\max}}
 \norm{v_x(t)}_{L^\infty}=\infty.
\]
This proves
\eqref{eq:slope-norm-breakdown} and concludes the proof.

\appendix

\section{The continuation criterion}\label{app:continuation}

We use the symmetrized energy underlying the local well-posedness
result in \cite{AlonsoOranDuranGranero2024}, keeping track of the
dependence on \(\norm{h}_{L^\infty}\) and
\(\norm{v_x}_{L^\infty}\). We give the energy calculation below
and then use the continuity equation to obtain a continuation
criterion involving only the velocity gradient. \medskip

First we show a refined a priori energy estimate for the solution. To that purpose, consider the energy
\begin{equation}\label{eq:continuation-energy}
 \mathfrak E(t)
 =\norm{h(t)}_{L^2}^2+\norm{v(t)}_{L^2}^2
 +\norm{\sqrt{\Lop}\,\partial_x^2h(t)}_{L^2}^2
 +\norm{\partial_x^3v(t)}_{L^2}^2.
\end{equation}
We present the a priori estimates for smooth solutions.
The extension to solutions in \eqref{eq:solution-class}
follows by the standard mollification and limiting argument
used in \cite[Section~4]{AlonsoOranDuranGranero2024},
which we do not repeat here.

\begin{Proposition}\label{prop:refined-energy}
For the solution in \eqref{eq:solution-class}, the energy
\(\mathfrak E\) satisfies
\begin{equation}\label{eq:continuation-energy-equivalence}
 \mathfrak E(t)\simeq
 \norm{h(t)}_{H^2}^2+\norm{v(t)}_{H^3}^2.
\end{equation}
Moreover, we have that
\begin{equation}\label{eq:refined-energy-estimate}
 \abs{\mathfrak E'(t)}
 \le C\left(
 1+\norm{h(t)}_{L^\infty}+\norm{v_x(t)}_{L^\infty}
 \right)\mathfrak E(t)
\end{equation}
for almost every \(t<T_{\max}\), where \(C>0\) is independent
of the solution.
\end{Proposition}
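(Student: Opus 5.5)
The plan has two parts: first show that \(\mathfrak E\) is equivalent to \(\norm{h}_{H^2}^2+\norm{v}_{H^3}^2\), then differentiate \(\mathfrak E\) in time.

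\textbf{Equivalence.} The symbol of \(\sqrt{\Lop}\) is \(\abs{\xi}/\sqrt{1+\xi^2}\), so
\[
\norm{\sqrt{\Lop}\,\partial_x^2h}_{L^2}^2=\int_\R\frac{\xi^6}{1+\xi^2}\abs{\widehat h}^2\dd\xi .
\]
Together with \(\norm{h}_{L^2}^2\), this is comparable to \(\int(1+\xi^2)^2\abs{\widehat h}^2\dd\xi\). Indeed, \(\xi^6/(1+\xi^2)\ge \xi^4/2\) for \(\abs\xi\ge1\), and the region \(\abs\xi\le1\) is controlled by the \(L^2\) term. Similarly, \(\norm{v}_{L^2}^2+\norm{\partial_x^3v}_{L^2}^2\simeq\norm{v}_{H^3}^2\). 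This gives \eqref{eq:continuation-energy-equivalence}.

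\textbf{Energy identity.} For smooth solutions I compute \(\tfrac{d}{dt}\mathfrak E\). Applying \(\partial_x^3\) to the velocity equation and \(\sqrt{\Lop}\,\partial_x^2\) to the density equation produces the top-order linear terms
\[
\int \Lop\partial_x^2 h\,\partial_x^3 v \qquad\text{and}\qquad \int \partial_x^3\Nop h\,\partial_x^3 v .
\]
Since \(\partial_x\Nop=-\Lop\), these cancel after one integration by parts; this is exactly why the weight \(\sqrt{\Lop}\) was chosen for \(h\). The low-order parts \(\norm h_{L^2}^2+\norm v_{L^2}^2\) are handled directly. One uses \(\norm{\Nop h}_{L^2}\le\norm h_{L^2}\) and writes \(\int (hv)_x h=\tfrac12\int v_xh^2\le \norm{v_x}_{L^\infty}\norm h_{L^2}^2\). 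The velocity \(L^2\) estimate needs \(\Cop(h)\in L^2\), which by Lemma~\ref{lem:force} is bounded by \((1+\norm h_{L^\infty})\norm h_{L^2}\), since \(\Q,\Nop\) are bounded on \(L^2\) and \(L^\infty\).

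\textbf{Nonlinear terms.} For the transport term \(\int\partial_x^3(vv_x)\partial_x^3v\), integrating the top part by parts gives \(\tfrac12\int v_x(\partial_x^3v)^2\), and the commutator terms are of the form \(\int v_{xx}\partial_x^3v\,\partial_x^3 v\)-type expressions. Here \(\norm{v_{xx}}_{L^\infty}\) is not allowed on the right-hand side, so I write \(\partial_x^3(vv_x)=v\partial_x^4v+4v_x\partial_x^3v+3v_{xx}^2\). The quantity \(\int v_{xx}^2\partial_x^3v\) integrates to \(-\int \ldots\); more precisely, \(\int v_{xx}^2 v_{xxx}=\tfrac13\int\partial_x(v_{xx}^3)=0\). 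So every term is bounded by \(\norm{v_x}_{L^\infty}\mathfrak E\).

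For the density, the term \(\int\sqrt{\Lop}\partial_x^2(hv)_x\,\sqrt{\Lop}\partial_x^2h=\int \Lop\partial_x^3(hv)\,\partial_x^2h\) is the main obstacle. I would decompose \(\Lop=I-\Q\); the \(\Q\) part is smoothing and is bounded by \(\norm{hv}_{H^2}\norm h_{H^2}\lesssim(\norm h_{L^\infty}+\norm{v}_{W^{1,\infty}})\mathfrak E\) via Kato–Ponce. The identity part gives \(\int\partial_x^3(hv)\partial_x^2h\). Its top terms are \(\int v\,\partial_x^3h\,\partial_x^2h=-\tfrac12\int v_x(\partial_x^2h)^2\), then \(\int h\,\partial_x^3v\,\partial_x^2h\le\norm h_{L^\infty}\mathfrak E\), plus intermediate terms such as \(\int h_x v_{xx}h_{xx}\). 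I would control these by Gagliardo–Nirenberg interpolation: \(\norm{h_x}_{L^4}^2\lesssim\norm h_{L^\infty}\norm{h_{xx}}_{L^2}\) and \(\norm{v_{xx}}_{L^4}^2\lesssim\norm{v_x}_{L^\infty}\norm{\partial_x^3v}_{L^2}\).

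In the velocity equation, the \(\Cop(h)\) term at level \(\partial_x^3\) also needs care. Using \eqref{eq:force-reduction}, the top part \((1+\Q h)\partial_x^3\Nop h\) pairs with \(\partial_x^3 v\) and cancels, as in the linear case, against the density term \(\int\Lop\partial_x^2(hv_x)\partial_x^2h\). The remainder is smoothed, since \(\Q h,\Nop h\in H^{4}\), and is bounded by \((1+\norm h_{L^\infty})\mathfrak E\) after noting \(\norm{\Q h}_{W^{k,\infty}}\lesssim\norm h_{L^\infty}\) for \(k\le2\). Checking this near-cancellation of the quasilinear coupling terms is, I expect, the delicate step. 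Where the cancellation is not exact, I will use that the \(\Q\)-parts are of lower order.
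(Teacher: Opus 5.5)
There is a genuine gap in the density transport term, at the step you treat as routine. After splitting $\Lop=I-\Q$, you bound the $\Q$-part of $\int_\R\Lop\partial_x^3(hv)\,h_{xx}\dd x$ by $\norm{hv}_{H^2}\norm{h}_{H^2}$ and Kato--Ponce. This brings in $\norm{v}_{L^\infty}$, which you write as part of $\norm{v}_{W^{1,\infty}}$. That quantity does not appear in \eqref{eq:refined-energy-estimate}, and it cannot be absorbed: $\norm{v}_{L^\infty}\lesssim\mathfrak E^{1/2}$ only yields $\mathfrak E^{3/2}$. The loss comes from treating $\Q$ as merely smoothing. The offending piece is $\int_\R\Q(v\,h_{xxx})\,h_{xx}\dd x$. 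Writing $\Q(vh_{xxx})=\Nop(vh_{xx})-\Q(v_xh_{xx})$, any bound through $\norm{\Q(vh_{xxx})}_{L^2}$ depends on $v$ itself. Yet the term vanishes for constant $v$, because $\Q\partial_x$ is skew-adjoint. So it is really a commutator, controlled by $v_x$ alone.

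The missing ingredient is to use this structure for the whole operator $\Lop$. The paper sets $g=\Q h_{xx}$, so that $h_{xx}=g-g_{xx}$, $\Lop h_{xx}=-g_{xx}$ and $\Nop h_{xx}=g_x$. Integrating by parts then gives
\[
 -\int_\R v\,h_{xxx}\,\Lop h_{xx}\dd x=\frac12\int_\R v_x\left[(\Lop h_{xx})^2-(\Nop h_{xx})^2\right]\dd x ,
\]
which is at most $C\norm{v_x}_{L^\infty}\mathfrak E$. The other three pieces of $\partial_x^3(hv)$ are then estimated against $\Lop h_{xx}$ exactly as you estimate them against $h_{xx}$, including your Gagliardo--Nirenberg step. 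If you prefer to keep your splitting, the same substitution $f=\Q f-\partial_x^2\Q f$ with $f=h_{xx}$ gives $\abs{\int_\R v f_x\,\Q f\dd x}\le C\norm{v_x}_{L^\infty}\norm{f}_{L^2}^2$, which is the bound you need.

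Conversely, the step you flag as delicate is not. At the $H^2\times H^3$ level no cancellation is needed in the velocity equation, since directly
\[
 \abs{\int_\R(1+\Q h)\,\partial_x^3\Nop h\,v_{xxx}\dd x}\le(1+\norm{h}_{L^\infty})\norm{h_{xx}}_{L^2}\norm{v_{xxx}}_{L^2}.
\]
The paper simply treats $[\Lop,\Nop h]h=(\Q h)\Nop h-\Q(h\Nop h)$ as a lower-order term, bounded in $H^3$ by $C\norm{h}_{L^\infty}\norm{h}_{H^2}$. The cancellation you describe is in any case not exact: it leaves $-\int_\R\Lop h\,\Lop h_{xx}\,v_{xxx}\dd x$. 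Also, $\Nop h\in H^3$, not $H^4$.
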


\begin{proof}
The norm equivalence
\eqref{eq:continuation-energy-equivalence} follows from
\cite[Lemma~4.1]{AlonsoOranDuranGranero2024}, applied with
\(k=2\), together with the boundedness of
\(\sqrt{\Lop}\) on \(L^2\) and the standard equivalence
\[
 \norm{v}_{H^3}^2
 \simeq \norm{v}_{L^2}^2+\norm{v_{xxx}}_{L^2}^2.
\]
At low order, testing the equations against \(h\) and \(v\),
respectively, and integrating by parts gives
\begin{align*}
 \frac12\frac{\dd}{\dd t}
 \left(\norm{h}_{L^2}^2+\norm{v}_{L^2}^2\right)
 &=-\frac12\int_\R v_xh^2\dd x
   +\int_\R v(h_x-\Nop h)\dd x\\
 &\quad-\int_\R v[\Lop,\Nop h]h\dd x.
\end{align*}
The linear term satisfies
\[
 \left|\int_\R v(h_x-\Nop h)\dd x\right|
 \le C\norm{v}_{L^2}\norm{h}_{H^1}
 \le C\mathfrak E.
\]
Also, using \(\Lop=I-\Q\), we have
\[
 [\Lop,\Nop h]h
 =(\Q h)\Nop h-\Q(h\Nop h).
\]
Since \(\Q f=G*f\) and \(\Nop f=G'*f\), with
\(\norm{G}_{L^1}=\norm{G'}_{L^1}=1\), Young's inequality gives
\[
 \norm{\Q f}_{L^p}\le\norm{f}_{L^p},
 \qquad
 \norm{\Nop f}_{L^p}\le\norm{f}_{L^p},
 \qquad 1\le p\le\infty.
\]
In particular, both operators are bounded on \(L^2\)
and \(L^\infty\). Therefore, it follows that
\begin{align*}
 \norm{[\Lop,\Nop h]h}_{L^2}
 &\le \norm{\Q h}_{L^\infty}\norm{\Nop h}_{L^2}
      +\norm{h}_{L^\infty}\norm{\Nop h}_{L^2}\\
 &\le C\norm{h}_{L^\infty}\norm{h}_{L^2}.
\end{align*}
Consequently,
\begin{equation}\label{eq:low-energy-bound}
\frac{\dd}{\dd t}
 \left(\norm{h}_{L^2}^2+\norm{v}_{L^2}^2\right)
 \le C\left(
 1+\norm{h}_{L^\infty}+\norm{v_x}_{L^\infty}
 \right)\mathfrak E.
\end{equation}

For the high-order terms, differentiate the first equation
twice and the second equation three times, and test against
\(\Lop h_{xx}\) and \(v_{xxx}\), respectively.
The identity
\(\partial_x^3\Nop h=-\Lop h_{xx}\)
makes the linear terms cancel, giving
\begin{equation}\label{eq:top-energy-decomposition}
 \frac12\frac{\dd}{\dd t}
 \left(
 \norm{\sqrt{\Lop}\,h_{xx}}_{L^2}^2
 +\norm{v_{xxx}}_{L^2}^2
 \right)
 =I_1+I_2+I_3,
\end{equation}
where
\begin{align*}
 I_1=-\int_\R\partial_x^3(hv)\Lop h_{xx}\dd x, \quad I_{2}=-\int_\R\partial_x^3(vv_x)v_{xxx}\dd x, \quad I_3=-\int_\R
 \partial_x^3\bigl([\Lop,\Nop h]h\bigr)v_{xxx}\dd x.
\end{align*}
To estimate \(I_1\), we expand $
 \partial_x^3(hv)
 =vh_{xxx}+3v_xh_{xx}+3v_{xx}h_x+v_{xxx}h
$
and write $
 I_1=I_{11}+I_{12}+I_{13}+I_{14}, $
with
\begin{align*}
 I_{11}&=-\int_\R vh_{xxx}\Lop h_{xx}\dd x,\ \ I_{12}=-3\int_\R v_xh_{xx}\Lop h_{xx}\dd x,\\
 I_{13}&=-3\int_\R v_{xx}h_x\Lop h_{xx}\dd x, \ \ I_{14}=-\int_\R v_{xxx}h\Lop h_{xx}\dd x.
\end{align*}
For \(I_{11}\), using
\(h_{xx}=\Q h_{xx}-\partial_x^2\Q h_{xx}\)
and integrating by parts gives
\[
 I_{11}
 =\frac12\int_\R v_x
 \left[
 (\Lop h_{xx})^2-(\Nop h_{xx})^2
 \right]\dd x.
\]
The \(L^2\)-boundedness of \(\Lop\) and \(\Nop\) therefore yields
\[
 \abs{I_{11}}
 \le C\norm{v_x}_{L^\infty}\norm{h_{xx}}_{L^2}^2
 \le C\norm{v_x}_{L^\infty}\mathfrak E.
\]

The terms \(I_{12}\) and \(I_{14}\) are estimated directly:
\begin{align*}
 \abs{I_{12}}
 &\le 3\norm{v_x}_{L^\infty}
       \norm{h_{xx}}_{L^2}\norm{\Lop h_{xx}}_{L^2}
 \le C\norm{v_x}_{L^\infty}\mathfrak E,\\
 \abs{I_{14}}
 &\le \norm{h}_{L^\infty}
       \norm{v_{xxx}}_{L^2}\norm{\Lop h_{xx}}_{L^2}
 \le C\norm{h}_{L^\infty}\mathfrak E.
\end{align*}

For \(I_{13}\), we use the interpolation inequality
\[
 \norm{f_x}_{L^4}^2
 \le 3\norm{f}_{L^\infty}\norm{f_{xx}}_{L^2}.
 \qquad f\in H^2(\R),
\]
Applying this estimate to \(h\) and \(v_x\), followed by
Young's inequality, gives
\begin{align*}
 \abs{I_{13}}
 \le 3\norm{h_x}_{L^4}\norm{v_{xx}}_{L^4}
       \norm{\Lop h_{xx}}_{L^2}&\le C
 \left(\norm{h}_{L^\infty}\norm{v_x}_{L^\infty}\right)^{1/2}
 \norm{h_{xx}}_{L^2}^{1/2}
 \norm{v_{xxx}}_{L^2}^{1/2}
 \norm{\Lop h_{xx}}_{L^2}\\
 &\le C\left(
 \norm{h}_{L^\infty}+\norm{v_x}_{L^\infty}
 \right)\mathfrak E.
\end{align*}

Combining the estimates for \(I_{11},I_{12},I_{13}\), and
\(I_{14}\), we conclude that
\[
 \abs{I_1}
 \le \sum_{j=1}^4\abs{I_{1j}}
 \le C\left(
 \norm{h}_{L^\infty}+\norm{v_x}_{L^\infty}
 \right)\mathfrak E.
\]
For \(I_2\), expanding the derivative gives  we write $I_2=I_{21}+I_{22}+I_{23},$
where
\begin{align*}
 I_{21}=-\int_\R vv_{xxxx}v_{xxx}\dd x, \ \  I_{22}=-4\int_\R v_x(v_{xxx})^2\dd x,\ \ I_{23}&=-3\int_\R v_{xx}^2v_{xxx}\dd x.
\end{align*}
Integration by parts yields
\[
 I_{21}=\frac12\int_\R v_x(v_{xxx})^2\dd x,
 \qquad
 I_{23}=-\int_\R\partial_x\bigl(v_{xx}^3\bigr)\dd x=0.
\]
Combining these identities with the expression for \(I_{22}\),
we obtain
\[
 I_2=-\frac72\int_\R v_x(v_{xxx})^2\dd x,
\]
and hence
\[
 \abs{I_2}
 \le \frac72\norm{v_x}_{L^\infty}\norm{v_{xxx}}_{L^2}^2
 \le C\norm{v_x}_{L^\infty}\mathfrak E.
\]
Finally, using $
 [\Lop,\Nop h]h
 =(\Q h)\Nop h-\Q(h\Nop h),
$
we split the nonlocal term as
$
 I_3=I_{31}+I_{32},
$
with
\begin{align*}
 I_{31}
 =-\int_\R
 \partial_x^3\bigl((\Q h)\Nop h\bigr)v_{xxx}\dd x, \ \ I_{32}
 =\int_\R
 \partial_x^3\Q(h\Nop h)v_{xxx}\dd x.
\end{align*}
The multiplier bounds
\[
 \norm{\Q f}_{H^{s+2}}=\norm{f}_{H^s},
 \qquad
 \norm{\Nop f}_{H^{s+1}}\le\norm{f}_{H^s},
\]
together with the Sobolev product estimates and the
\(L^\infty\)-boundedness of \(\Q\) and \(\Nop\), give
\begin{align*}
 \norm{(\Q h)\Nop h}_{H^3}
 &\le C\left(
 \norm{\Q h}_{L^\infty}\norm{\Nop h}_{H^3}
 +\norm{\Nop h}_{L^\infty}\norm{\Q h}_{H^3}
 \right)\\
 &\le C\norm{h}_{L^\infty}\norm{h}_{H^2}.
\end{align*}
Consequently,
\[
 \abs{I_{31}}
 \le \norm{(\Q h)\Nop h}_{H^3}\norm{v_{xxx}}_{L^2}
 \le C\norm{h}_{L^\infty}\mathfrak E.
\]

Similarly, we have thta
\begin{align*}
 \norm{\Q(h\Nop h)}_{H^3}
 =\norm{h\Nop h}_{H^1}&\le C\left(
 \norm{h}_{L^\infty}\norm{\Nop h}_{H^1}
 +\norm{\Nop h}_{L^\infty}\norm{h}_{H^1}
 \right)\le C\norm{h}_{L^\infty}\norm{h}_{H^1},
\end{align*}
and therefore
\[
 \abs{I_{32}}
 \le \norm{\Q(h\Nop h)}_{H^3}\norm{v_{xxx}}_{L^2}
 \le C\norm{h}_{L^\infty}\mathfrak E.
\]
Thus, we conclude that
\[
 \abs{I_3}
 \le\abs{I_{31}}+\abs{I_{32}}
 \le C\norm{h}_{L^\infty}\mathfrak E.
\]
Combining the bounds for \(I_1\), \(I_2\), and \(I_3\) with
\eqref{eq:low-energy-bound} and
\eqref{eq:top-energy-decomposition} proves
\eqref{eq:refined-energy-estimate} for smooth solutions. For solutions in \eqref{eq:solution-class}, the calculation
is justified by spatial mollification.
The mollifiers commute with the Fourier multipliers, so
the linear cancellation remains exact. The transport
commutators vanish in \(L^2\) by the Friedrichs commutator
estimate, while the remaining products converge by the
stated Sobolev regularity.
\end{proof}

The preceding estimate involves both \(h\) and \(v_x\).
The continuity equation controls the first coefficient
in terms of the time integral of
\(\norm{v_x}_{L^\infty}\), which gives the following criterion.

\begin{Theorem}\label{thm:continuation}
Let \((h,v)\) be the maximal solution in
\eqref{eq:solution-class}. If \(T_{\max}<\infty\), then
\begin{equation}\label{eq:appendix-continuation-criterion}
 \int_0^{T_{\max}}
 \norm{v_x(t)}_{L^\infty}\dd t=\infty.
\end{equation}
In particular,
\begin{equation}\label{eq:appendix-slope-limit}
 \limsup_{t\uparrow T_{\max}}
 \norm{v_x(t)}_{L^\infty}=\infty.
\end{equation}
\end{Theorem}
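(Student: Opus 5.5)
We argue by contradiction: assume \(T_{\max}<\infty\) but
\[
 M:=\int_0^{T_{\max}}\norm{v_x(t)}_{L^\infty}\dd t<\infty.
\]
The plan is to show that \(\norm{h(t)}_{L^\infty}\) stays bounded on \([0,T_{\max})\). Then Proposition~\ref{prop:refined-energy} and Gronwall's inequality bound \(\mathfrak E\), hence \(\norm{h}_{H^2}+\norm{v}_{H^3}\) by \eqref{eq:continuation-energy-equivalence}. This contradicts the continuation alternative \eqref{eq:continuation-alternative}. The second statement \eqref{eq:appendix-slope-limit} is then immediate: if \(\limsup_{t\uparrow T_{\max}}\norm{v_x}_{L^\infty}<\infty\), then \(\norm{v_x}_{L^\infty}\) would be bounded near \(T_{\max}\). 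It is also continuous on every compact subinterval, by \eqref{eq:solution-class} and \(H^3\hookrightarrow C^2_b\). So the integral over the finite interval would be finite.

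\textbf{Step 1: \(L^\infty\) bound on \(h\) via characteristics.} We use the flow \(X(t,\xi)\) from the proof of Lemma~\ref{lem:positive-density}. That proof applies to any initial data, since positivity of \(\rho_0\) was not needed for the existence or surjectivity of the flow, nor for the formula. Identity \eqref{eq:rho-characteristic} gives
\[
 \abs{\rho(t,X(t,\xi))}\le\abs{\rho_0(\xi)}\,e^{M}
 \le\bigl(1+\norm{h_0}_{L^\infty}\bigr)e^{M}.
\]
Since \(X(t,\cdot)\) is onto \(\R\), we get \(\norm{h(t)}_{L^\infty}\le 1+(1+\norm{h_0}_{L^\infty})e^{M}=:A\) for all \(t<T_{\max}\). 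Here \(\norm{h_0}_{L^\infty}<\infty\) by \(H^2\hookrightarrow C_b^1\). Note that this step uses only \(v\in C([0,T];C^1_b)\) on each compact subinterval, so the flow argument is legitimate.

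\textbf{Step 2: Gronwall.} By \eqref{eq:refined-energy-estimate},
\[
 \mathfrak E(t)\le\mathfrak E(0)\exp\Bigl(C\int_0^t\bigl(1+A+\norm{v_x(s)}_{L^\infty}\bigr)\dd s\Bigr)
 \le\mathfrak E(0)\,e^{C((1+A)T_{\max}+M)}.
\]
Because \(\mathfrak E'\) is only estimated for a.e.\ \(t\), I would apply Gronwall in integral form, using that \(\mathfrak E\) is absolutely continuous (indeed \(C^1\) by the class \eqref{eq:solution-class} combined with the mollification justification). Then the norm equivalence gives a uniform bound on \(\norm{h}_{H^2}+\norm{v}_{H^3}\), contradicting \eqref{eq:continuation-alternative}.

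\textbf{Main obstacle.} The delicate point is the regularity bookkeeping rather than the estimates. One must ensure that \(\mathfrak E\) is absolutely continuous, so that the differential inequality integrates. One must also ensure the continuation alternative \eqref{eq:continuation-alternative} itself holds. If it is not already available, I would derive it from the local theory: the existence time depends only on an upper bound for \(\norm{h_0}_{H^2}+\norm{v_0}_{H^3}\). A uniform bound would therefore allow restarting the solution at times close to \(T_{\max}\) and extending it beyond \(T_{\max}\), by uniqueness.
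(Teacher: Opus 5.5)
Your proposal is correct and follows the paper's proof essentially step for step. The characteristic representation of $\rho$ bounds $\norm{h(t)}_{L^\infty}$ in terms of $\norm{\rho_0}_{L^\infty}$ and $\int_0^{T_{\max}}\norm{v_x}_{L^\infty}\dd t$; Gronwall applied to Proposition~\ref{prop:refined-energy} then bounds $\norm{h}_{H^2}+\norm{v}_{H^3}$ up to $T_{\max}$; and the restart argument, which you correctly identify as the content of the continuation alternative, contradicts maximality. Your explicit derivation of the $\limsup$ statement from the integral criterion, via continuity of $t\mapsto\norm{v_x(t)}_{L^\infty}$ on compact subintervals, usefully spells out what the paper only says ``follows similarly.''
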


\begin{proof}
Suppose, to the contrary, that \(T_{\max}<\infty\) and
\begin{equation}\label{eq:Astar-definition}
 A_*:=\int_0^{T_{\max}}
 \norm{v_x(t)}_{L^\infty}\dd t<\infty.
\end{equation}
Along the characteristic flow \eqref{eq:characteristics},
the continuity equation gives
\begin{equation}\label{eq:appendix-rho-flow}
 \rho(t,X(t,\xi))
 =\rho_0(\xi)\exp\left(
 -\int_0^t v_x(s,X(s,\xi))\dd s\right).
\end{equation}
This identity holds without any sign assumption on \(\rho_0\).
For each \(t<T_{\max}\), the flow maps \(\R\) onto itself.
Taking absolute values, we obtain
\begin{equation}\label{eq:appendix-rho-bound}
 \norm{\rho(t)}_{L^\infty}
 \le\norm{\rho_0}_{L^\infty}e^{A_*},
 \qquad
 \norm{h(t)}_{L^\infty}
 \le1+\norm{\rho_0}_{L^\infty}e^{A_*}
 =:M.
\end{equation}

Applying Proposition~\ref{prop:refined-energy} and Gr\"onwall's inequality
shows that
\begin{align}
 \mathfrak E(t)
 &\le\mathfrak E(0)\exp\left(
 C\int_0^t
 \left[
 1+\norm{h(s)}_{L^\infty}
  +\norm{v_x(s)}_{L^\infty}
 \right]\dd s
 \right)
 \notag\\
 &\le\mathfrak E(0)
 \exp\left(C\left[
 (1+ M)t+A_*
 \right]\right).
 \label{eq:appendix-Gronwall}
\end{align}
Since \(T_{\max}<\infty\), the norm equivalence
\eqref{eq:continuation-energy-equivalence} implies
\[
 \sup_{0\le t<T_{\max}}
 \left(
 \norm{h(t)}_{H^2}+\norm{v(t)}_{H^3}
 \right)<\infty.
\]

The local existence theory in
\cite{AlonsoOranDuranGranero2024} provides an existence
time bounded below on bounded subsets of \(H^2\times H^3\).
Indeed, Sobolev embedding in
\eqref{eq:refined-energy-estimate} gives the
norm-dependent bound
\[
 \abs{\mathfrak E'(t)}
 \le C\left(\mathfrak E(t)+\mathfrak E(t)^{3/2}\right),
\]
which also holds uniformly in the regularized local
construction.
The solution can therefore be restarted at any
\(t_0<T_{\max}\) for a common time interval of length
\(\delta>0\), independent of \(t_0\).
Choosing \(t_0\) sufficiently close to \(T_{\max}\) so that
\(t_0+\delta>T_{\max}\), uniqueness allows the restarted
solution to extend the original one beyond \(T_{\max}\).
This contradicts maximality and proves
\eqref{eq:appendix-continuation-criterion}. The proof of \eqref{eq:appendix-slope-limit} follows similarly.
\end{proof}

\section*{Acknowledgements}
D.A.-O. acknowledges partial support from grant RYC2023-045563-I,
funded by
MICIU/\allowbreak{}AEI/\allowbreak{}10.13039/\allowbreak{}501100011033
and ESF+.
D.A.-O. and R.G.-B. also acknowledge support from the projects
``EDPs Deterministas y Estoc\'asticas en Biolog\'ia y Mec\'anica
de Fluidos'' (PID2025-171734NAI00) and
``An\'alisis Matem\'atico Aplicado y Ecuaciones Diferenciales''
(PID2022-141187NB-I00), funded by
MCIN/\allowbreak{}AEI/\allowbreak{}10.13039/\allowbreak{}501100011033/\allowbreak{}FEDER, UE.

\setlength{\bibsep}{1ex}


\begin{thebibliography}{99}

\bibitem{AlonsoOran2021}
D.~Alonso-Or\'an.
\newblock Asymptotic shallow models arising in magnetohydrodynamics.
\newblock \emph{Water Waves}, 3:371--398, 2021.
\newblock \url{https://doi.org/10.1007/s42286-021-00050-4}.

\bibitem{AlonsoOranDuranGranero2024}
D.~Alonso-Or\'an, A.~Dur\'an, and R.~Granero-Belinch\'on.
\newblock Derivation and well-posedness for asymptotic models of cold plasmas.
\newblock \emph{Nonlinear Analysis}, 244:113539, 2024.
\newblock \url{https://doi.org/10.1016/j.na.2024.113539}.

\bibitem{AlonsoOranDuranGranero2025}
D.~Alonso-Or\'an, A.~Dur\'an, and R.~Granero-Belinch\'on.
\newblock On the existence of traveling wave solutions for cold plasmas.
\newblock \emph{Physica D: Nonlinear Phenomena}, 481:134803, 2025.
\newblock \url{https://doi.org/10.1016/j.physd.2025.134803}.

\bibitem{AlonsoOranGranero2024}
D.~Alonso-Or\'an and R.~Granero-Belinch\'on.
\newblock Well-posedness for a hyperbolic--hyperbolic--elliptic system
describing cold plasmas.
\newblock \emph{Applied Mathematics Letters}, 147:108863, 2024.
\newblock \url{https://doi.org/10.1016/j.aml.2023.108863}.

\bibitem{BaeChoiKwon2024}
J.~Bae, J.~Choi, and B.~Kwon.
\newblock Formation of singularities in plasma ion dynamics.
\newblock \emph{Nonlinearity}, 37(4):045011, 2024.
\newblock \url{https://doi.org/10.1088/1361-6544/ad2b16}.

\bibitem{BaeChoiKwon2025}
J.~Bae, J.~Choi, and B.~Kwon.
\newblock Singularity formation of hydromagnetic waves in cold plasma.
\newblock \emph{Applied Mathematics Letters}, 160:109344, 2025.
\newblock \url{https://doi.org/10.1016/j.aml.2024.109344}.

\bibitem{BaeGranero2022}
H. Bae and R. Granero-Belinch{\'o}n,
\textit{Singularity formation for the Serre-Green-Naghdi equations
and applications to $abcd$-Boussinesq systems},
Monatshefte f{\"u}r Mathematik, 198 (2022), 503--516.

\bibitem{BerezinKarpman1964}
Yu.~A. Berezin and V.~I. Karpman.
\newblock Theory of nonstationary finite-amplitude waves in a low-density
plasma.
\newblock \emph{Soviet Physics JETP}, 19:1265--1271, 1964.

\bibitem{bona2016singular}
Jerry~L Bona and Min Chen.
\newblock Singular solutions of a Boussinesq system for water waves.
\newblock {\em J. Math. Study}, 49(3):205--220, 2016.

\bibitem{ChenYang2024}
J.~Chen and S.~Yang.
\newblock Wave breaking phenomenon in the unidirectional non-local wave model.
\newblock \emph{Applied Mathematics Letters}, 150:108949, 2024.
\newblock \url{https://doi.org/10.1016/j.aml.2023.108949}.


\bibitem{GardnerMorikawa1960}
C.~S. Gardner and G.~K. Morikawa.
\newblock \emph{Similarity in the Asymptotic Behavior of Collision-Free
Hydromagnetic Waves and Water Waves}.
\newblock Research Report NYO-9082, Courant Institute of Mathematical
Sciences, New York University, 1960.

\bibitem{KakutaniOnoTaniutiWei1968}
T.~Kakutani, H.~Ono, T.~Taniuti, and C.-C.~Wei.
\newblock Reductive perturbation method in nonlinear wave propagation. II.
Application to hydromagnetic waves in cold plasma.
\newblock \emph{Journal of the Physical Society of Japan}, 24(5):1159--1166,
1968.
\newblock \url{https://doi.org/10.1143/JPSJ.24.1159}.


\bibitem{PuLi2019}
X.~Pu and M.~Li.
\newblock KdV limit of the hydromagnetic waves in cold plasma.
\newblock \emph{Zeitschrift f\"ur angewandte Mathematik und Physik},
70(1):32, 2019.
\newblock \url{https://doi.org/10.1007/s00033-019-1076-4}.

\bibitem{SuGardner1969}
C.~H. Su and C.~S. Gardner.
\newblock Korteweg--de Vries equation and generalizations. III. Derivation of
the Korteweg--de Vries equation and Burgers equation.
\newblock \emph{Journal of Mathematical Physics}, 10(3):536--539, 1969.
\newblock \url{https://doi.org/10.1063/1.1664873}.

\bibitem{SunXieXing2022}
J. Sun, S. Xie and Y. Xing,
\textit{Local discontinuous Galerkin methods for the $abcd$ nonlinear Boussinesq system},
Communications on Applied Mathematics and Computation,
4 (2022), no. 2, 381--416.


\end{thebibliography}
\end{document}